
\documentclass[10pt]{amsart}


\usepackage[utf8]{inputenc}
\usepackage[german,english]{babel}     
\usepackage{amsmath}
\usepackage{amsfonts}
\usepackage{amssymb}
\usepackage{amsthm}
\usepackage{mathrsfs}                  
\usepackage{appendix}                  
\usepackage{tikz}                      
\usetikzlibrary{arrows}

\newcommand{\lra}{\longrightarrow}



\newcommand{\C}{{\mathbb{C}}}

\newcommand{\HH}{{\mathbb{H}}}         
\newcommand{\I}{{\mathbb{I}}}          

\newcommand{\Q}{{\mathbb{Q}}}
\newcommand{\R}{{\mathbb{R}}}

\newcommand{\Z}{{\mathbb{Z}}}          





\newcommand{\Dd}{{\mathcal{D}}}        

\newcommand{\Hh}{{\mathcal{H}}}        


\newcommand{\Ss}{{\mathcal{S}}}        




\newcommand{\swann}{\mathcal{U}(N)}    
\newcommand{\euler}{\mathcal{X}_{0}}   

\newcommand{\sst}{\scriptscriptstyle}
\newcommand{\mc}[1]{\mathcal{#1}}
\newcommand{\mr}[1]{\mathrm{#1}}
\newcommand{\ms}[1]{\mathsf{#1}}
\newcommand{\scr}[1]{\mathscr{#1}}
\newcommand{\mf}[1]{\mathfrak{#1}}

\newcommand{\eqst}[1]{\begin{equation*} #1 
                      \end{equation*}}
\newcommand{\eq}[1]{\begin{equation} #1
                    \end{equation}}
\newcommand{\alst}[1]{\begin{align*} #1
                      \end{align*}}
\newcommand{\al}[1]{\begin{align} #1
                      \end{align}}


\theoremstyle{plain}
\newtheorem{thm}{Theorem}[section]
\newtheorem{lem}[thm]{Lemma}
\newtheorem*{lemma*}{Lemma}
\newtheorem{prop}[thm]{Proposition}

\theoremstyle{definition}
\newtheorem{defn}{Definition}

\newtheorem{ex}{Example}
\newtheorem*{note}{Note}
\newtheorem{rmk}{Remark}

\theoremstyle{remark}
\newtheorem*{question*}{Question}


\DeclareMathOperator{\id}{id}
\DeclareMathOperator{\End}{End}
\DeclareMathOperator{\Map}{Map}
\DeclareMathOperator{\Hom}{Hom}
\DeclareMathOperator{\pr}{pr}
\DeclareMathOperator{\vl}{vl}
\DeclareMathOperator{\im}{im}


\usepackage{hyperref}
\hypersetup{
    colorlinks = true,%
    linkcolor = blue,%
    citecolor = blue,%
    pdfcreator={LaTeX},
    pdftitle={Generalized spinors hyperKahler geometry and kahler structures on 4-manifolds},%
    pdfsubject={hyperkahler manifolds, kahler structures, 4-manifolds},%
    pdfauthor={Varun Thakre},%
    pdfkeywords={gauge theory, generalized dirac operator, non-linear dirac operato, hyperkahler manifolds, kahler structures},%
}



\begin{document}

\title{K\"ahler and symplectic structures on 4-manifolds and hyperK\"ahler geometry}

\author{Varun Thakre}

\curraddr{Mathematics dept., Harish-Chandra Research Institute, Allahabad, India}
\email{varunthakre@hri.res.in}

\subjclass[2010]{Primary 53C26, 53C27}

\date{\today}

\keywords{Spinor, four-manifold, hyperK\"ahler manifolds, spin structures, symplectic, K\"ahler}


\begin{abstract}
A non-linear generalization of the Dirac operator in 4-dimensions, obtained by replacing the spinor representation with a hyperK\"ahler manifold admitting certain symmetries, is considered. We show that the existence of a covariantly constant, generalized spinor defines a K\"ahler structure on the base 4-dimensional manifold. For a class of hyperK\"ahler manifolds obtained via hyperK\"ahler reduction, we also show that a harmonic spinor, under mild conditions, defines a symplectic structure. Finally, we show that if a covariantly constant, generalized spinor satisfies generalized Seiberg-Witten equations, the metric on the base manifold has a constant scalar curvature.
\end{abstract}

\maketitle


\section{Introduction}

 
 Let $(X, g_{\sst X})$ be a 4-dimensional, smooth, oriented Riemannian manifold and let $\pi:Q \lra X$ be a $Spin^c$-structure on $X$. The Levi-Civita connection on the frame bundle $\pi_{SO}:P_{SO(4)} \lra X$ and a connection $A$ on the principal $U(1)$-bundle $\pi_{U(1)}:P_{U(1)}:=Q/Spin(4) \lra X$ determine a unique connection $\ms{A}$ on $Q$. Let $W^{\pm} \lra X$ denote the associated positive and negative spinor bundles respectively and $\sigma: W^{+} \lra \Lambda^2_+(X)$ denote the quadratic map. If $u \in \Gamma(X, W^+)$ is a non-vanishing section, then $\sigma\circ u$ defines a non-degenerate, self-dual 2-form on $X$. It was shown in \cite{BLPR2000, scorpan2002} independently that if $\nabla^{\ms{A}} u =0$ for some $U(1)$-connection $A$, then $\sigma\circ u$ defines a K\"ahler structure on $X$, compatible with a metric $g'_{\sst X} = c\cdot g_{\sst X}, ~~ c\in\R$. On the other hand, under mild conditions, it was shown that if $u$ is a harmonic spinor, then $\sigma\circ u$ defines a symplectic structure on $X$. Scorpan \cite{scorpan2002} gave a charaterization of the K\"ahler and symplectic 2-forms that lie in the image of the quadratic map.
 
Taubes introduced a non-linear generalization \cite{taubes} of the $Spin$-Dirac operator in dimension 3, wherein the spinor representation is replaced by a hyperK\"ahler manifold $(M, g_{\sst M}, I_{1}, I_{2}, I_{3})$ - also known as the \emph{target hyperK\"ahler manifold} - admitting an action of $Sp(1)$ that permutesthe 2-sphere of complex structures. \emph{Generalized spinors} are defined to be sections of the associated fibre-bundle with a typical fibre $M$. The Dirac operator is replaced by a first-order, non-linear elliptic differential operator $\Dd$ for maps taking values in $M$. For a twisting principal $G$-bundle $P_G$, every connection $A$ on $P_G$ defines a twisted, generalized Dirac operator $\Dd_{\ms{A}}$. The idea was extended to dimension 4 by Pidstrygach \cite{victor}. 

The current article investigates the role of certain special generalized spinors and hyperK\"ahler manifolds in defining a K\"ahler or a symplectic structure on $X$.

One way of obtaining hyperK\"ahler manifolds with requisite properties, is via Swann's construction \cite{swann}. Starting with a quaternionic K\"ahler manifold of positive scalar curvature, Swann's construction produces a hyperK\"ahler manifold endowed with a permuting $Sp(1)$-action. The manifold is a fibration over the quaternionic K\"ahler manifold. Additionally an action of a Lie group $G$ that preserves the quaternionic K\"ahler structure can be lifted to a hyperHamiltonian action on the hyperK\"ahler manifold. For target hyperK\"ahler manifolds obtained via Swann's construction, covariantly constant spinors define a K\"ahler structure on $X$:

\begin{thm}
 \label{thm:Swann bundles kahler structure}
Let ~$\swann$ denote the total space of a Swann bundle over some quaternionic K\"ahler manifold $N$ with positive scalar curvature and assume that $N$ admits an action of $U(1)$ that preserves the quaternionic K\"ahler structure. Let $\mu: M \lra \mf{sp}(1)^{\ast}$ denote the associated hyperK\"ahler $U(1)$-moment map and $u \in \Map(Q, ~\swann)^{Spin^c}$ be a spinor whose range does not contain a fixed point of $U(1)$-action on $\swann$. If there exists a connection $\ms{A}$ on $Q$ such that the covariant derivative $D_{\ms{A}}u = 0$, then, under the isomorphism $\Phi: \mf{sp}(1)^{\ast} \rightarrow \Lambda^2_+(\R^4)^{\ast}$, $\omega:= \Phi(\mu \circ u)$ defines a K\"ahler structure on $X$.
\end{thm}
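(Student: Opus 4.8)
The plan is to reduce the statement to the classical linear result of \cite{BLPR2000, scorpan2002} by exploiting the specific geometry of Swann bundles. Recall that a Swann bundle $\swann \lra N$ over a quaternionic K\"ahler manifold carries a free, permuting $Sp(1)$-action whose hyperK\"ahler moment map $\mu:\swann\lra\mf{sp}(1)^{\ast}$ is, up to scale, the square of the radial coordinate along the fibres times the fundamental vector fields; in particular $\mu$ is $Sp(1)$-equivariant and its zero set is exactly the cone point, which is excluded by hypothesis since $u$ avoids fixed points. The first step is therefore to write, for the section $u\in\Map(Q,\swann)^{Spin^c}$, the composite $\mu\circ u$ as a section of the bundle associated to $Q$ via the adjoint-type action on $\mf{sp}(1)^{\ast}\cong\Lambda^2_+$, and to observe that $\omega=\Phi(\mu\circ u)$ is then a well-defined self-dual $2$-form on $X$ which is nowhere zero precisely because $u$ never hits the cone point and the $Sp(1)$-action is otherwise locally free.

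The second, and main, step is to differentiate. I would compute $\nabla^{\ms A}\omega$ using the fact that $\mu$ is a moment map: its differential is controlled by the symplectic forms $\omega_1,\omega_2,\omega_3$ of the hyperK\"ahler structure contracted with the fundamental vector fields of the permuting action. The key identity to establish is that $D_{\ms A}u=0$ forces $\nabla^{\ms A}(\mu\circ u)$ to be algebraically determined — in fact, that the covariant constancy of $u$ propagates through $\mu$ in such a way that $\omega$ satisfies the same first-order system that a covariantly constant classical spinor section of $W^+$ would satisfy after applying $\sigma$. Concretely, one uses the $Sp(1)$-equivariance $\mu(q\cdot m)=\mathrm{Ad}^\ast_q\,\mu(m)$: differentiating this relation along $u$ and feeding in $D_{\ms A}u=0$ shows $\omega$ is parallel with respect to a conformally rescaled Levi-Civita connection, exactly as in the linear case. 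The scalar factor $c$ appears as (a power of) the radial function $|u|^2$ evaluated fibrewise, and one must check it is constant — this follows because $D_{\ms A}u=0$ implies the $Sp(1)$-invariant radial function $r\circ u$ is itself covariantly constant, hence constant on the connected $X$.

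The third step is bookkeeping: having shown $\omega$ is a nowhere-vanishing, self-dual, covariantly constant $2$-form for the metric $g'_X=c\cdot g_X$, one invokes the standard argument (as recorded in the introduction) that such a form gives an integrable complex structure $J$ via $g'_X(J\cdot,\cdot)=\omega$, with $\omega$ the associated K\"ahler form; covariant constancy gives both $dJ$-integrability and $d\omega=0$, hence the K\"ahler condition. The main obstacle I anticipate is the second step: one has to be careful that the permuting $Sp(1)$-action and the twisting $U(1)$-action interact correctly, i.e.\ that the connection $\ms A$ on $Q$ restricts to the Levi-Civita connection on the $Sp(1)$-part so that the $Sp(1)$-moment map identity can be differentiated cleanly, and that the $U(1)$-moment map contribution (which is what actually appears in $\mu$ in the theorem statement) does not obstruct parallelism. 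Resolving this requires unpacking Swann's construction enough to know that the $U(1)$-moment map on $\swann$ is compatible with — indeed built from — the same radial and permuting-$Sp(1)$ data, so that $\Phi(\mu\circ u)$ inherits the good differential-geometric properties rather than just the algebraic ones.
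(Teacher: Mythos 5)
Your overall architecture --- realize $\omega=\Phi(\mu\circ u)$ as a nowhere-vanishing self-dual $2$-form, prove it is parallel, then invoke the standard fact that a parallel, non-degenerate, self-dual form of constant length yields an integrable K\"ahler structure for a constant rescaling of $g_{\sst X}$ --- coincides with the paper's. The gap sits in your second step, and it is exactly the difficulty you flag but do not resolve. First, you repeatedly treat $\mu$ as a moment map for the permuting $Sp(1)$-action; a permuting action is not tri-holomorphic and admits no hyperK\"ahler moment map, and the $\mu$ in the theorem is the moment map of the tri-holomorphic $U(1)$-action (this is why the hypothesis excludes $U(1)$-fixed points from the range of $u$ --- that is what makes $\omega$ non-degenerate, not local freeness of the $Sp(1)$-action). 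Second, differentiating the equivariance relation $\mu(q\cdot m)=\mathrm{Ad}^{\ast}_{q}\,\mu(m)$ only controls $d(\mu\circ u)$ in the \emph{vertical} directions of $Q$, whereas $D_{\phi}\omega$ is computed from the \emph{horizontal} derivative; so ``feeding in $D_{\ms{A}}u=0$'' into the equivariance relation does not produce parallelism, and you supply no identity that does.

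The paper closes precisely this gap using the hyperK\"ahler potential of the Swann bundle. Since the permuting action on $\swann$ is free, the Euler vector field gives a nowhere-vanishing section $\euler\circ u\in\Gamma(Q,u^{\ast}W^{+})$, and Lemma~\ref{linearized cov. der} gives $D_{\ms{A}}u=\nabla^{\ms{A},\psi}(\euler\circ u)$. Combining this with the Clifford identity $\omega\bullet(\euler\circ u)=\sum_{l}(\mu_{l}\circ u)\,I_{l}(\euler\circ u)$ and the Leibniz rule, the hypothesis $D_{\ms{A}}u=0$ yields $(D_{\phi}\omega)\bullet(\euler\circ u)=0$; because $D_{\phi}\omega$ acts by traceless skew-adjoint endomorphisms and annihilates a non-vanishing section, it vanishes identically, i.e.\ $\nabla\omega=0$. (A shorter route, which is in effect the first line of the paper's proof, is to note that on horizontal lifts $d(\mu\circ u)=\langle d\mu, D_{\ms{A}}u\rangle=0$ by the moment-map identity $d\mu=\iota_{\mf{g}}\omega$ for the $U(1)$-action; either way, the mechanism is the moment-map/potential structure of the tri-holomorphic $U(1)$-action, not $Sp(1)$-equivariance.) Your remaining steps --- constancy of the conformal factor and the passage from a parallel twistor-sphere section to an integrable K\"ahler structure --- are fine and agree with the paper.
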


 Another way of constructing such hyperK\"ahler manifolds is via hyperK\"ahler reduction of $\HH^{n}$ by a hyperHamiltonian action of a Lie group $H$. The technique due to Hitchin, Karlhede, Lindstr\"om and Roc\v{e}k \cite{hklr}, is an analogue of Marsden-Weinstein reduction for symplectic manifolds. It has proven to be quite useful in constructing highly non-trivial hyperK\"ahler manifolds, starting from flat quaternionic spaces. If the usual action of $Sp(1)$ on $\HH^n$ preserves the zero set $\mu^{-1}_H(0)$ of the $H$-moment map, then the action descends to a permuting action on the quotient $M_H:=\mu^{-1}_H(0)/H$. Moreover, if there exists a hyperHamiltonian $U(1)$-action on $\HH^n$ that commutes with $H$-action and preserves $\mu^{-1}_H(0)$, then it descends to a hyperHamiltonian action on $M_H$.
 
For $M = M_H$, the covariantly constant spinors define a K\"ahler structure on $X$:
\begin{thm}
 \label{thm:HK-reduction kahler structure}
Let $\mu: M_H \lra \mf{sp}(1)^{\ast}$ denote a hyperK\"ahler $U(1)$-moment map and $u \in \Map(Q, M_H)^{Spin^c}$ be a spinor whose range does not contain a fixed point of $U(1)$-action on $M$. If there exists a connection $\ms{A}$ on $Q$ such that the covariant derivative $D_{\ms{A}}u = 0$, then, $\omega:= \Phi(\mu \circ u)$ defines a K\"ahler structure on $X$.
\end{thm}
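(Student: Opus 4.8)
The plan is to deduce this theorem from Theorem \ref{thm:Swann bundles kahler structure} — or rather from a common underlying mechanism — by isolating exactly which structural features of the target hyperK\"ahler manifold were actually used in that proof. The two constructions (Swann bundles and hyperK\"ahler quotients of $\HH^n$) differ in origin but share the two properties we need: a permuting $Sp(1)$-action and a commuting hyperHamiltonian $U(1)$-action with moment map $\mu$ descending to the quotient $M_H = \mu_H^{-1}(0)/H$. So the first step is to verify that $M_H$, with its induced metric, complex structures, permuting $Sp(1)$-action, and $U(1)$-moment map $\mu:M_H\to\mf{sp}(1)^\ast$, satisfies the same hypotheses that drive the Swann-bundle argument; the hyperK\"ahler quotient construction of Hitchin–Karlhede–Lindstr\"om–Roc\v{e}k guarantees that the reduced metric is hyperK\"ahler, that the residual $Sp(1)$-action still permutes the sphere of complex structures (since it preserved $\mu_H^{-1}(0)$), and that the $U(1)$-action descends hyperHamiltonianly.

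Next I would unwind what $D_{\ms{A}}u = 0$ says pointwise. A covariantly constant generalized spinor $u \in \Map(Q,M_H)^{Spin^c}$ trivializes, in an appropriate local frame, to a constant map into $M_H$ along with a compatibility between the Levi-Civita connection on the frame bundle and the structure on $M_H$. The key computation is to differentiate the composite $\mu\circ u: X \to \Lambda^2_+(X)^\ast$ (after applying $\Phi$) and show $\nabla^{\mr{LC}}(\mu\circ u) = 0$: this follows because $\mu$ is $Sp(1)$-equivariant (its defining property as a moment map for the permuting action's linearization, i.e. it intertwines the $Sp(1)$-action on $M_H$ with the adjoint-type action on $\mf{sp}(1)^\ast$) and $u$ is covariantly constant, so the chain rule forces $\omega := \Phi(\mu\circ u)$ to be a parallel self-dual 2-form. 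A parallel 2-form is automatically closed, and a parallel self-dual 2-form of constant (nonzero) norm is the K\"ahler form of a K\"ahler structure compatible with a rescaled metric — this is the classical fact already invoked in the Introduction for the linear case \cite{BLPR2000, scorpan2002}. The nonvanishing of $\omega$ is where the hypothesis that the range of $u$ avoids the $U(1)$-fixed-point set enters: at a fixed point $\mu$ would be critical and $\Phi(\mu\circ u)$ could degenerate, so excluding fixed points guarantees $\omega \neq 0$ everywhere, hence (by continuity and parallelism) of constant norm.

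The main obstacle I anticipate is the equivariance bookkeeping for $\mu$ under the descent to the quotient: one must check that the $U(1)$-moment map on $M_H$ inherited from $\HH^n$ is still $Sp(1)$-equivariant in the precise sense needed so that $\mu\circ u$ lands parallel-ly in $\Lambda^2_+$. On $\HH^n$ the $U(1)$ and $Sp(1)$ actions commute and $Sp(1)$ permutes the complex structures, so the $U(1)$-moment map components transform as the standard $Sp(1)$-representation on $\R^3 \cong \mf{sp}(1)^\ast$; this property is preserved under restriction to $\mu_H^{-1}(0)$ and passage to the $H$-quotient precisely because $H$ commutes with both actions and preserves the zero set. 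Once this is nailed down, the remainder is a transcription of the Swann-bundle computation, with the Swann fibration structure replaced by the HKLR quotient projection $\mu_H^{-1}(0)\to M_H$; in fact the proof can be phrased so that Theorems \ref{thm:Swann bundles kahler structure} and \ref{thm:HK-reduction kahler structure} are two instances of a single lemma whose hypotheses are "permuting $Sp(1)$-action plus commuting hyperHamiltonian $U(1)$-action with equivariant moment map."
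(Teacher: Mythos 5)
Your strategy is genuinely different from the paper's, and the difference matters. The paper does not argue intrinsically on $M_H$: it lifts the spinor $u$ to an $\HH^{n}$-valued spinor $\widehat{u}$ on an auxiliary $H\times Spin^c$-bundle $P_{\widehat{H}}$, proves a one-to-one correspondence between pairs $(u,\ms{A})$ with $D_{\ms{A}}u=0$ downstairs and pairs $(\widehat{u},\mr{A})$ with $D_{\mr{A}}\widehat{u}=0$, $\mu_H\circ\widehat{u}=0$ upstairs (Proposition \ref{prop:1-1 correspondence}), and then applies Scorpan's pointwise identity $\|\widehat{u}\|^2 D_{\mr{A}}\widehat{u} = i(D_{\phi}\omega)\bullet\widehat{u} + \langle D_{\mr{A}}\widehat{u}, i\widehat{u}\rangle_{\R}\, i\widehat{u}$ on the flat space to conclude $D_{\phi}\omega=0$ (Proposition \ref{prop:covariant derivative formula}). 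Your route --- chain rule on horizontal lifts plus $Spin^c$-equivariance of the descended $U(1)$-moment map, giving $\nabla(\mu\circ u)=0$ directly on $M_H$ --- is shorter and its core computation is sound: for a horizontal vector $\tilde{v}$ one has $du(\tilde v)=D_{\ms{A}}u(\tilde v)=0$, so $\mu\circ u$ is a covariantly constant section of the associated $\mf{sp}(1)^{\ast}$-bundle, and the induced connection on $\Lambda^2_+(X)$ is Levi-Civita. What the paper's detour buys is access to the linear theory on $\HH^n$ (uniqueness of the connection, the splitting formula for the Dirac operator), which is what powers Theorem \ref{thm:HK-reduction symplectic structure}; your intrinsic argument does not reach that.

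Two points need repair. First, your premise that the Swann-bundle proof uses only ``a permuting $Sp(1)$-action plus a commuting hyperHamiltonian $U(1)$-action'' is not accurate: the computation \eqref{eqtn. 1}--\eqref{eqtn. 3} hinges on the Euler vector field $\euler=\mathrm{grad}\,\rho_0$ and on Lemma \ref{linearized cov. der} ($D_{\ms{A}}u=\nabla^{\ms{A},\psi}(\euler\circ u)$), i.e.\ on the existence of a hyperK\"ahler potential and on formula \eqref{eq:moment map swann bundle} for the moment map. A generic hyperK\"ahler quotient $M_H$ has neither (the residual $Sp(1)$-action need not even be free), which is precisely why the paper switches strategies and lifts to $\HH^n$. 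So the closing claim that ``the remainder is a transcription of the Swann-bundle computation'' would fail if carried out literally; your chain-rule argument must stand on its own, including the passage from a parallel non-degenerate self-dual form to an integrable complex structure. Second, non-degeneracy: avoiding $U(1)$-fixed points in the range of $u$ guarantees $d\mu\neq 0$ there (since $d\mu=\iota_{K}\omega$ with $K\neq 0$), but it does not by itself give $\mu\circ u\neq 0$, which is what non-degeneracy of $\omega=\Phi(\mu\circ u)$ requires; already on $\HH^2$ the moment map $\mu(h)=\tfrac12\bar{h}^t i h$ vanishes at $h=(1,j)$, which is not a fixed point of the diagonal $U(1)$. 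Your stated reason (``at a fixed point $\mu$ would be critical'') addresses the wrong quantity. The paper is also thin on this point, but in your write-up it is an explicit load-bearing step and needs either an additional hypothesis or an argument specific to the moment map at hand.
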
 

Morever, in this case, the harmonic spinors, under mild conditions, define a symplectic structure on $X$:
\begin{thm}
\label{thm:HK-reduction symplectic structure}
Let $\ms{A},~\mu$ be as in Theorem \ref{thm:HK-reduction kahler structure}. Let $u \in \Map(Q, M_H)^{Spin^c}$ be a spinor whose range does not contain a fixed point of $U(1)$-action on $M_H$. Assume that $D_{\ms{A}}u \perp  \ker{d\mu}$. If $\Dd_{\ms{A}} u = 0$, then $\omega$ defines a symplectic structure on $X$.
\end{thm}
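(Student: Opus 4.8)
The strategy is to exploit that, by Theorem \ref{thm:HK-reduction kahler structure}, the two-form $\omega = \Phi(\mu\circ u)$ is already nondegenerate whenever $u$ is nowhere a $U(1)$-fixed point — this is a pointwise statement, proved there without using any equation on $u$ beyond the fact that $\mu\circ u$ does not vanish on the fixed locus. So the only thing left to establish is $d\omega = 0$, and for this I would set up the standard Weitzenböck-type comparison between the full covariant derivative $D_{\ms{A}}u$ and the generalized Dirac operator $\Dd_{\ms{A}}u$. Writing $D_{\ms{A}}u \in \Gamma(X, T^\ast X \otimes u^\ast TM)$ and using the Clifford/quaternionic structure on $T^\ast X$ together with the hyperKähler triple $(I_1,I_2,I_3)$ on $M$, the operator $\Dd_{\ms{A}}$ is the projection of $D_{\ms{A}}$ onto the appropriate summand; concretely, in a local orthonormal coframe $e^0,\dots,e^3$, if $D_{\ms{A}}u = \sum_k e^k\otimes \xi_k$ with $\xi_k = \nabla^{\ms{A}}_{e_k} u$, then $\Dd_{\ms{A}}u = \xi_0 + I_1\xi_1 + I_2\xi_2 + I_3\xi_3$ (up to signs/conventions fixed earlier).

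Next I would compute $d\omega$ directly. Since $\mu$ is a hyperKähler moment map for the $U(1)$-action with vector field $K$ on $M$, its three components satisfy $d\mu_a = \iota_K \Omega_a$, where $\Omega_a(\cdot,\cdot) = g_M(I_a\,\cdot,\cdot)$ are the Kähler forms. Pulling back along $u$ and differentiating, $d(\mu_a\circ u)$ is expressed through $\Omega_a(K\circ u,\, du)$, hence through $g_M(I_a(K\circ u),\, \xi_k)\,e^k$. Exterior differentiating $\omega = \sum_a \Phi_a\,(\mu_a\circ u)$ (where the $\Phi_a$ are the constant self-dual two-forms corresponding to the basis of $\mf{sp}(1)^\ast$) then produces a two-form whose coefficients are built from the $g_M(I_a(K\circ u),\,\xi_k)$. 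The key algebraic observation is that the self-dual combination $\sum_a \Phi_a\,I_a$ acting on the $\xi_k$ reorganizes exactly into the combination $\xi_0 + I_1\xi_1+I_2\xi_2+I_3\xi_3 = \Dd_{\ms{A}}u$ paired against $K\circ u$ — so that $d\omega$ becomes a universal expression in $g_M(K\circ u,\, \Dd_{\ms{A}}u)$ plus a term that is the $g_M$-pairing of $K\circ u$ with the component of $D_{\ms{A}}u$ lying in $\ker d\mu$ (the "orthogonal complement to the moment-map directions"). The first term vanishes by the hypothesis $\Dd_{\ms{A}}u = 0$; the second term vanishes precisely because of the hypothesis $D_{\ms{A}}u \perp \ker d\mu$, which forces the relevant inner products to collapse. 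Hence $d\omega = 0$ and, combined with nondegeneracy, $\omega$ is symplectic.

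The main obstacle I anticipate is the bookkeeping in that last algebraic step: showing that the "bad" part of $d\omega$ — the part not proportional to $\Dd_{\ms{A}}u$ — is genuinely annihilated by the condition $D_{\ms{A}}u\perp\ker d\mu$, rather than merely being of a different type. This requires a careful analysis of the decomposition $u^\ast TM = (\text{span of the three gradient vector fields }\nabla\mu_a) \oplus \ker d\mu$ at each point where $K\circ u\neq 0$ (which is guaranteed by the no-fixed-point hypothesis, since $K$ vanishes exactly at fixed points), checking that the three vectors $\nabla(\mu_a\circ u)$-directions are orthogonal to one another with equal norm — a consequence of the quaternionic relations $I_aI_b = \pm I_c$ applied to $K$ — so that the $SO(3)$-equivariance matches the $\Lambda^2_+$ structure on $X$ and the computation closes. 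One also has to confirm that "mild conditions" here means exactly the two stated hypotheses (no fixed point in the range, and $D_{\ms{A}}u\perp\ker d\mu$) and that no positivity or curvature input is secretly needed, unlike in the Kähler case where covariant constancy did the work; here harmonicity is weaker, so the orthogonality assumption is what compensates.
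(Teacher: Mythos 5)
Your outline is essentially correct, but it takes a genuinely different route from the paper. The paper never computes $d\omega$ on $M_H$ directly: it first establishes (Proposition \ref{prop:1-1 correspondence}) a one-to-one correspondence between pairs $(u,\ms{A})$ with $\Dd_{\ms{A}}u=0$ valued in $M_H$ and pairs $(\widehat{u},\mr{A})$ with $\Dd_{\mr{A}}\widehat{u}=0$ and $\mu_H\circ\widehat{u}=0$ valued in flat $\HH^{n}$, and then invokes Scorpan's splitting formula for the \emph{linear} Dirac operator upstairs, $\|\widehat{u}\|^2\,\Dd_{\mr{A}}\widehat{u}=i\left(2d^{\ast}\omega+\langle D_{\mr{A}}\widehat{u},i\widehat{u}\rangle_{\R}\right)\bullet\widehat{u}$ (Proposition \ref{prop:Dirac operator splitting formula}); the hypothesis $D_{\ms{A}}u\perp\ker d\mu$ is what forces $\langle D_{\mr{A}}\widehat{u},i\widehat{u}\rangle_{\R}=0$, since the $U(1)$-vector field $i\widehat{u}$ lies in $\ker d\mu$, whence $d^{\ast}\omega=0$ and, $\omega$ being self-dual, $d\omega=0$. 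Your plan runs the analogous computation intrinsically on $M_H$, via $d\mu_a=\iota_K\Omega_a$ and the quaternion relations; this does close (with the sign conventions of \eqref{eq:gen. dirac operator explicit}), and it is worth noting that only the weaker condition $D_{\ms{A}}u\perp K\circ u$ is actually consumed, because the components of $D_{\ms{A}}u$ in $\ker d\mu$ transverse to $K$ never enter $d(\mu_a\circ u)$ in the first place. Your version has the advantage of not needing $M_H$ to be a quotient of flat space, so it would apply to any permuting-$Sp(1)$ target with a $U(1)$ moment map; what the paper's detour buys is that the Clifford bookkeeping is outsourced to \cite{scorpan2002} and, more importantly, that nondegeneracy of $\omega$ comes for free from the identity $\omega\bullet\widehat{u}=-\tfrac{i}{2}\|\widehat{u}\|^2\widehat{u}$, which bounds $|\mu\circ\widehat{u}|$ below by a multiple of $\|\widehat{u}\|^2$. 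This is the one point where your sketch is too quick: nondegeneracy of $\omega$ is equivalent to $\mu\circ u$ being nowhere zero, which is not literally the same as the range of $u$ avoiding the $U(1)$-fixed locus (where $K$ vanishes), and in the K\"ahler theorems it is extracted together with constancy of $\mu\circ u$ from $D_{\ms{A}}u=0$, which is unavailable here. You should therefore either prove $K\circ u\neq 0\Rightarrow\mu\circ u\neq 0$ for the targets at hand or import the pointwise lower bound on $|\mu\circ\widehat{u}|$ from the flat model.
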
 

\vspace{0.3cm}  
 The layout of the article is as follows: Section \ref{sec:preliminaries} is divided into two parts. In the first part, sub-section \ref{subsec:hyperkahler manifolds}, we introduce the preliminaries on hyperK\"ahler manifolds and describe the two mentioned constructions; namely hyperK\"ahler reduction and Swann's construction. In the second part, sub-section \ref{subsec:generalized dirac operator}, we introduce the preliminaries needed in order to define the generalized Dirac operator. The details of some of the technical part in this section is left to the Appendix. Section \ref{sec:kahler and symplectic structures}, gives the proof of Theorem \ref{thm:Swann bundles kahler structure}, Theorem \ref{thm:HK-reduction kahler structure} and Theorem \ref{thm:HK-reduction symplectic structure}. Finally in Section \ref{sec:equivalence of structures}, we prove that the $Spin^c$-structures defined by $\omega$ in both the situations is isomorphic to the one determined by $g_{\sst X}$.
 

\subsection*{Acknowledgements} The ideas used in the second half of this article are a part of the author's doctoral dissertation. The author wishes to thank his advisor Prof. Dr. Viktor Ya. Pidstrygach for his encouragement and endless support. The author wishes to thank DFG (Deutsche Forschungsgemeinschaft) for financial support during his tenure as a doctoral student at Georg-August-Universit\"at, G\"ottingen.
 

 \section{Preliminaries and Notations}
 \label{sec:preliminaries} 
 

 \subsection{HyperK\"ahler manifolds with permuting actions}
 \label{subsec:hyperkahler manifolds}
 

A $4n$-dimensional Riemannian manifold $(M, g_{\sst M})$ is said to be \emph{hyperK\"ahler} if it is endowed with a set of almost-complex structures 
\eqst{
I_i \in \End(TM), ~~ i=1,2,3, ~~ I_i I_j = \delta_{ijk} I_k
}
that are covariantly constant with respect to the Levi-Civita connection. The quaternionic structure induces a covariantly constant algebra homomorphism
 \eq{
 \label{eq:algebra homomorphism}
 \I: \HH \lra \End(TM), ~~~ \I_\xi := \I(\xi) = \xi_0 \id_{TM} + \xi_1 I_1 + \xi_2 I_2 + \xi_3 I_3 ~~ \text{for} ~~ \xi \in \HH
 }
Sitting inside the quaternion algebra, is the standard 2-sphere of purely imaginary quaternions
\eqst{
S^2 = \{ \xi = \xi_1 i + \xi_2 j + \xi_3 k ~|~ |\xi| = 1   \}
}
 Every $\xi \in S^2$ defines a K\"ahler structure on $M$. In other words, $M$ has an entire family of K\"ahler structures parametrized by $S^2 \in \mf{Im}(\HH)$. 
 
\begin{defn}
An isometric action of a smooth Lie group $G$ on $M$ is said to be \emph{tri-holomorphic} if it fixes the 2-sphere of complex structures $S^2$; i.e,
\eqst{ 
 Tg ~ \I_{\xi} = \I_{\xi} ~ Tg, ~~ \text{for} ~~ g \in G,~~ \xi \in S^2
}
\end{defn}
In particular, $G$ preserves the K\"ahler 2-forms $\omega_{i} = g_{\sst M}(I_i(\cdot), \cdot)$, for $i=1,2,3$. We can combine $\omega_i$ to define a single $\mf{sp}(1)$-valued 2-form 
\eqst{
\omega \in \mf{sp}(1)^{\ast}\otimes\Lambda^2 M, ~~ \omega_{\xi}:=\langle \omega, \xi \rangle = g_{\sst M}(\I_{\xi}(\cdot), \cdot)
}
Additionally, if all the three associated moment maps exist, then the action is said to be \emph{tri-Hamiltonian}. Again, one can combine the three moment maps into one to define a  $\mf{sp}(1)$-valued map $\mu: M \longrightarrow \mf{sp}(1)^{\ast}\otimes \mf{g}^{\ast}$, that satisfies
\begin{enumerate}
\item $d\mu = \iota_{\mf{g}} \omega$ 
\item $\mu(gh) = \text{Ad}^{\ast}_g (\mu(h))$
\end{enumerate}

The map $\mu$ is called a \emph{hyperK\"ahler moment map}.

\begin{ex}
Consider $\HH^n$ with the hyperK\"ahler structure given by $(R_{\bar{i}}, R_{\bar{j}}, R_{\bar{k}})$. Let $Sp(n)$ denote the group of $\HH$-linear isometries of $\HH^n$ and $G \subset Sp(n)$.

The $G$-action on $\HH^n$, given by left multiplication $G \times \HH^n \ni (g, h) \longmapsto gh \in \HH^n$ is a tri-holomorphic action with the hyperK\"ahler moment map 
\eqst{
\langle \mu(x), \xi \otimes \eta \rangle = \frac{1}{2} \xi x^{\dagger}\eta x
}
\end{ex}

\subsection*{HyperK\"ahler reduction}
\label{subsec:hyperkahler reduction}

Many non-trivial examples of hyperK\"ahler manifolds can be constructed via hyperK\"ahler reduction, from the quaternionic vector space $\HH^n$. HyperK\"ahler reduction is an extension of the well-known Marsden-Weinstein reduction for symplectic manifolds. 

Suppose that $M$ is endowed with a hyper-Hamiltonian action of a compact Lie group $H$. Let $\mu_H: M \lra \mf{sp}(1)^{\ast}\otimes \mf{h}^{\ast}$ be a hyperK\"ahler moment map for the $H$-action and assume that $a \in \mf{h}$ is invariant under the co-adjoint action of $H$. Then $\mu^{-1}_H(a)$ is an $H$-invariant submanifold of $M$. 

\begin{thm}\cite{hklr}
Let $a \in \mf{h}$ be a central regular value of the moment map $\mu_H$ and assume that $H$ acts freely and properly on $\mu^{-1}_H(a)$. Then the quotient $M_H:=\mu^{-1}_H(a)/H$ is again a hyperK\"ahler manifold.
\end{thm}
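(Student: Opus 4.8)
The plan is to realise the hyperK\"ahler reduction as a simultaneous orthogonal reduction of all three K\"ahler structures, carried out fibrewise on tangent spaces and then descended to the quotient. Throughout I write $\iota: \mu^{-1}_H(a) \hookrightarrow M$ for the inclusion and $\pi: \mu^{-1}_H(a) \to M_H$ for the orbit projection, and for $\xi \in \mf{h}$ I let $\xi_{\sst M}$ denote the fundamental vector field of the $H$-action. First I would dispose of the smooth structure: since $a$ is a regular value, $\mu^{-1}_H(a)$ is an embedded submanifold of $M$ of codimension $3\dim H$; since $a$ is central and $\mu_H$ is $H$-equivariant, the level set is $H$-invariant, and freeness together with properness of the $H$-action make $\pi$ a principal $H$-bundle, so that $M_H$ is a smooth manifold of dimension $4n-4\dim H$.

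Next comes the key linear-algebra step, where all the moment-map geometry enters. Fix $p \in \mu^{-1}_H(a)$ and let $\Vv_p = \{\xi_{\sst M}(p) : \xi \in \mf{h}\}$ be the tangent space to the orbit, of dimension $\dim H$ by freeness. The defining relation $d\mu_H = \iota_{\mf{h}}\omega$ reads componentwise as $d\mu^\xi_i(v) = \omega_i(\xi_{\sst M}, v) = g_{\sst M}(I_i\,\xi_{\sst M}, v)$, so that
\[
T_p \mu^{-1}_H(a) = \ker d\mu_H\big|_p = \big( I_1\Vv_p + I_2\Vv_p + I_3\Vv_p \big)^{\perp}.
\]
Defining the horizontal space $\Hh_p := \Vv_p^{\perp} \cap T_p\mu^{-1}_H(a)$, this yields
\[
\Hh_p = \big( \Vv_p + I_1\Vv_p + I_2\Vv_p + I_3\Vv_p \big)^{\perp} = (\HH\cdot\Vv_p)^{\perp}.
\]
The quaternionic span $\HH\cdot\Vv_p$ is invariant under each $I_i$ (the relations $I_iI_j = \delta_{ijk}I_k$ merely permute the four summands), and since each $I_i$ is a $g_{\sst M}$-isometry, its orthogonal complement $\Hh_p$ is $I_i$-invariant as well. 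Comparing $\dim\Hh_p = \dim T_p\mu^{-1}_H(a) - \dim\Vv_p = 4n - 4\dim H$ with $\Hh_p = (\HH\cdot\Vv_p)^{\perp}$ forces $\dim\HH\cdot\Vv_p = 4\dim H$, whence $\dim\Hh_p = 4(n-\dim H)$ and $d\pi_p$ restricts to an isomorphism $\Hh_p \cong T_{[p]}M_H$.

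Using this identification I would transport the structures to $M_H$. Because the $H$-action is isometric and tri-holomorphic, both $g_{\sst M}$ and each $I_i$ are $H$-invariant and restrict to $\Hh_p$; as $d\pi_p|_{\Hh_p}$ is $H$-equivariant along each fibre, these data are independent of the chosen point in a fibre and descend to a well-defined Riemannian metric $\bar g$ and almost-complex structures $\bar I_1, \bar I_2, \bar I_3$ on $M_H$. The quaternion relations $\bar I_i \bar I_j = \delta_{ijk}\bar I_k$ and the compatibility $\bar g(\bar I_i \cdot, \cdot) = -\bar g(\cdot, \bar I_i \cdot)$ are inherited verbatim from $M$, so $(M_H, \bar g, \bar I_1, \bar I_2, \bar I_3)$ is at least an almost-hyperHermitian manifold.

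It remains to show that the $\bar I_i$ are parallel, and this is the main obstacle. I would argue through the fundamental $2$-forms. By construction $\pi^*\bar\omega_i = \iota^*\omega_i$, where $\bar\omega_i = \bar g(\bar I_i \cdot, \cdot)$; since $\omega_i$ is closed on $M$, and $\iota^*, \pi^*$ commute with $d$ while $\pi^*$ is injective on forms, each $\bar\omega_i$ is closed on $M_H$. Thus $M_H$ carries three orthogonal almost-complex structures obeying the quaternion relations whose fundamental forms are all closed; by the criterion of Hitchin--Karlhede--Lindstr\"om--Roc\v{e}k this forces integrability of the $\bar I_i$ and parallelism with respect to the Levi-Civita connection of $\bar g$, so that $(M_H, \bar g, \bar I_1, \bar I_2, \bar I_3)$ is hyperK\"ahler. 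The delicate point is precisely this passage from closed fundamental forms to parallel complex structures: unlike the K\"ahler case, integrability is not separately assumed and must be extracted from the simultaneous closedness of all three forms. An equivalent way to organise this final step, which makes integrability manifest, is to fix $I_1$, observe that $\omega_2 + i\omega_3$ is a holomorphic symplectic form and $\mu_2 + i\mu_3$ a holomorphic moment map, and realise $M_H$ as the K\"ahler quotient by $\mu_1$ of the complex-symplectic quotient by $\mu_2 + i\mu_3$; both operations preserve the integrable K\"ahler data, and the descended holomorphic symplectic form recovers $\bar I_2, \bar I_3$.
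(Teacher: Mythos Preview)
The paper does not give its own proof of this theorem: it is quoted verbatim from \cite{hklr} as background and immediately followed by examples, with no argument supplied. There is therefore nothing in the paper to compare your proposal against.

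That said, your sketch is correct and is precisely the standard argument from the cited source: identify the horizontal distribution $\Hh_p = (\HH\cdot\Vv_p)^{\perp}$ via the moment-map condition, use quaternion-invariance of $\Hh_p$ to push $g_{\sst M}$ and the $I_i$ down through the Riemannian submersion $\pi$, check that $\pi^{\ast}\bar\omega_i = \iota^{\ast}\omega_i$ so that each $\bar\omega_i$ is closed, and conclude with Hitchin's lemma that an almost-hyperHermitian structure with all three fundamental $2$-forms closed is automatically hyperK\"ahler. The alternative packaging you mention at the end---treating $\omega_2 + i\omega_3$ as a holomorphic symplectic form and performing a K\"ahler reduction inside a complex-symplectic reduction---is also in \cite{hklr} and gives the same result.
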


\begin{ex}
Many interesting examples of hyperK\"ahler manifolds fall under this category. To name a few
\begin{enumerate}
\item Co-tangent bundles of complex Lie groups \cite{kron88, kobak-swann96}
\item Co-adjoint orbits of semi-simple Lie groups \cite{kronheimer}
\item Moduli space of framed, charge $k$ instantons on $S^4$ \cite{adhm}
\end{enumerate}
\end{ex}

\subsection*{Swann bundles}

Let $Sp(1)$ denote the group of unit quaternions and $\mf{sp}(1)$ its Lie algebra. Note that $\mf{sp}(1) \cong \mf{Im}(\HH)$. A \emph{permuting action} of $Sp(1)$ or $SO(3)$ on $M$ is an isometric action, such that the induced action on the 2-sphere of complex structures $S^2$ is the standard action of $Sp(1)$ or $SO(3)$ on $S^2$
\eqst{ 
 Tq ~ \I_{\xi} ~ Tq^{-1} = \I_{ q\xi \bar{q}}, ~~ \text{for} ~~ q \in Sp(1),~~ \xi \in S^2 \subset \mf{sp}(1)
 } 

Amongst the hyperK\"ahler manifolds admitting a permuting $Sp(1)$-action, there are those that also admit a \emph{hyperK\"ahler potential}. A hyperK\"ahler potential is a real-valued function $\rho: M \lra \R$ which is a K\"ahler potential w.r.t all three complex structures simultaneously; i.e,
\eqst{
-d(\I_{\xi}(d\rho)) = 2\omega_{\xi}~~ \text{for}~~ \xi \in \mf{sp}(1), ~~ ||\xi||^2 = 1.
}

A \emph{quaternionic K\"ahler manifold} $N$ is a $4n$-dimensional manifold whose holonomy is contained in $Sp(n) Sp(1)  := (Sp(n) \times Sp(1))/\pm 1$. Let $F_{N}$ denote the reduction of the principal frame bundle $P_{SO(4n)}$ to $Sp(n) Sp(1)$-bundle over $N$. Then $\mathscr{S}(N) = F_{N}/Sp(n)$ is a principal $SO(3)$-bundle, which is a frame bundle of the three dimensional vector subbundle of skew symmetric endomorphisms of $TN$. The $Sp(1)$-action on $\HH$ (by left multiplication) descends to an isometric action of $SO(3)$ on $\HH^{*}/\pm 1$. The Swann bundle over $N$ is defined to be the principal $\HH^{*}/\Z_{2}$-bundle 
\eqst{ 
\mathcal{U}(N):=\mathscr{S}(N) \times_{\scriptscriptstyle SO(3)} \HH^{*}/\Z_{2} \lra N
} 

\begin{thm}[\cite{swann}]
\label{thm: swanns theorem}
Let $N$ be a quaternionic K\"ahler manifold with a positive scalar curvature. Then $\swann$ is a hyperK\"ahler manifold with a free, permuting $Sp(1)$ action. Moreover, $\swann$ also admits a hyperK\"ahler potential given by \[\rho_{0} = \frac{1}{2} r^{2}\] 
where $r$ is the co-ordinate along $\HH^{\ast}/\Z_2$. If $N$ has an isometric action of a Lie group $G$ that preserves the quaternionic K\"ahler structure, then the action can be lifted to a hyper-Hamiltonian action of $G$ on $\swann$.
\end{thm}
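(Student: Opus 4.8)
The plan is to recognise $\swann$ as a metric cone, build the hyperK\"ahler structure on it directly from the Levi-Civita data of $N$, and then read off the hyperK\"ahler potential and the symmetry statements. Observe that $\HH^{\ast}/\Z_2\cong\R_{>0}\times SO(3)$ as an $SO(3)$-space --- $SO(3)$ acting by left translation on the $SO(3)$-factor and trivially on $\R_{>0}$ --- so the associated bundle collapses,
\[
 \swann \;=\; \mathscr{S}(N)\times_{\sst SO(3)}\bigl(\R_{>0}\times SO(3)\bigr)\;\cong\;\R_{>0}\times\mathscr{S}(N),
\]
and $\swann$ is the metric cone over $\mathscr{S}(N)$, with radial coordinate $r$ (the coordinate along $\HH^{\ast}/\Z_2$). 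Since $\mr{Hol}(N)\subseteq Sp(n)Sp(1)$, the Levi-Civita connection descends to a principal connection $1$-form $\theta=(\theta_1,\theta_2,\theta_3)$ on the $SO(3)$-bundle $\mathscr{S}(N)$, and $\mathscr{S}(N)$ carries a tautological triple of horizontal $2$-forms $\varpi=(\varpi_1,\varpi_2,\varpi_3)$ transforming as an $SO(3)$-vector and restricting locally to the three almost-Hermitian $2$-forms of $N$. Writing $\kappa>0$ for the (constant) reduced scalar curvature of $N$, the defining properties of a quaternionic-K\"ahler metric give the structure equations
\[
 d\varpi_i \;=\; -\epsilon_{ijk}\,\theta_j\wedge\varpi_k, \qquad d\theta_i \;=\; \kappa\,\varpi_i-\tfrac12\,\epsilon_{ijk}\,\theta_j\wedge\theta_k,
\]
the first saying $\varpi$ is parallel for $\theta$, the second that the curvature of $\theta$ equals $\kappa\,\varpi$; I would pull $\theta$ and $\varpi$ back to $\swann$ along $\swann\to\mathscr{S}(N)$.

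Next I would put $g_{\sst\swann}=dr^{2}+r^{2}g_{\sst\mathscr{S}}$, where $g_{\sst\mathscr{S}}$ is the natural metric on $\mathscr{S}(N)$ assembled from the horizontal lift of $g_{\sst N}$ and $\sum_i\theta_i^{2}$ (normalisation fixed by $\kappa$), and define $I_1,I_2,I_3\in\End(T\swann)$ by letting them act on the horizontal lift of $TN$ through the tautological complex structures determined by the point of $\mathscr{S}(N)$, and on the rank-$4$ distribution spanned by $r\partial_r$ and the fundamental vector fields $\zeta_1,\zeta_2,\zeta_3$ of the $SO(3)$-action by quaternion multiplication ($r\partial_r\leftrightarrow 1$, $\zeta_i\leftrightarrow$ $i$-th imaginary unit), so that $I_iI_j=\epsilon_{ijk}I_k$. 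A direct computation --- this is where the structure equations and the sign $\kappa>0$ enter --- shows that each $I_i$ is orthogonal for $g_{\sst\swann}$ and that its fundamental $2$-form is
\[
 \omega_i \;=\; g_{\sst\swann}(I_i\cdot,\cdot)\;=\;\tfrac12\,d\bigl(r^{2}\theta_i\bigr).
\]
Being exact, each $\omega_i$ is closed; and recall that an almost hyperHermitian manifold all three of whose K\"ahler forms are closed is hyperK\"ahler (the $I_i$ are then parallel for the Levi-Civita connection). Hence $(\swann,g_{\sst\swann},I_1,I_2,I_3)$ is hyperK\"ahler. Positive scalar curvature is used exactly here: it makes $g_{\sst\swann}$ positive definite and fixes the sign of $\kappa$ needed in the last identity (for $\kappa<0$ the same formulae produce a metric of indefinite signature). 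Finally $\rho_0=\tfrac12 r^{2}$ satisfies $d\rho_0=r\,dr$, and a short computation from the definition of the $I_i$ gives $-\tfrac12\,d(\I_\xi\,d\rho_0)=\omega_\xi$ for each unit $\xi\in\mf{sp}(1)$; so $\rho_0$ is a simultaneous K\"ahler potential, i.e.\ a hyperK\"ahler potential, equal to $\tfrac12 r^{2}$ as claimed.

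It remains to treat the symmetries. The permuting $Sp(1)$-action on $\swann$ comes from multiplication on the $\HH^{\ast}$-fibre; it rotates $(\varpi_i)$, hence $(\omega_i)$, via the standard action on the $2$-sphere of complex structures, and freeness is inherited from the free action on the fibre. For the last assertion, an isometric action of $G$ on $N$ preserving the quaternionic-K\"ahler structure lifts to an action on $F_N$ preserving the $Sp(n)Sp(1)$-reduction, hence acts on $\mathscr{S}(N)$ commuting with $SO(3)$, hence on $\swann$ trivially along the $\HH^{\ast}$-fibre; as $g_{\sst\swann}$, the $I_i$ and $\rho_0$ are built solely from the $G$-invariant data $g_{\sst N}$, $\theta$, $r$ (the tautological forms $\varpi_i$ being invariant under bundle automorphisms commuting with the structure group), the induced $G$-action is tri-holomorphic and fixes $\rho_0$. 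On any hyperK\"ahler manifold with hyperK\"ahler potential $\rho_0$, a tri-holomorphic action fixing $\rho_0$ is automatically tri-Hamiltonian: one checks that
\[
 \langle\mu(x),\,\xi\otimes a\rangle\;=\;\tfrac12\,d\rho_0\bigl(\I_\xi(a^{\#})\bigr),
\]
with $a^{\#}$ the vector field generated by $a\in\mf{g}$, satisfies $d\mu=\iota_{\mf{g}}\,\omega$ and the equivariance law, using $\Ll_{a^{\#}}\rho_0=0$, $\Ll_{a^{\#}}\I_\xi=0$ and Cartan's formula.

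The main obstacle is the computation in the second paragraph: organising the bundle-defined forms $\theta$ and $\varpi$ into globally defined tensors on $\swann$, and then pushing the quaternionic-K\"ahler structure equations through the verification that the closed forms $\tfrac12 d(r^{2}\theta_i)$ are compatible with $g_{\sst\swann}$ and realise the quaternion relations $I_iI_j=\epsilon_{ijk}I_k$. Once that is settled, the hyperK\"ahler property, the identification of the potential, and the equivariance statements follow formally.
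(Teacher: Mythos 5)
The paper does not prove this theorem: it is quoted from Swann \cite{swann} as background, so there is no internal proof to compare against. Your sketch is, in substance, a faithful reconstruction of Swann's original argument: realise $\swann$ as the metric cone over the total space of $\mathscr{S}(N)$, feed the quaternionic--K\"ahler structure equations (parallelism of the tautological triple $\varpi$ and the Alekseevskii curvature identity $d\theta_i+\tfrac12\epsilon_{ijk}\theta_j\wedge\theta_k=\kappa\,\varpi_i$) into the exact expressions $\omega_i=\tfrac12\,d(r^{2}\theta_i)$, and invoke Hitchin's lemma that an almost hyperHermitian structure with closed K\"ahler forms is hyperK\"ahler; the potential $\tfrac12 r^{2}$ and the lifting of symmetries then come out as you describe, and your moment-map formula is consistent with the paper's equation \eqref{eq:moment map swann bundle}. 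Two caveats, both minor. First, the step you flag as ``a direct computation'' --- that the $I_i$ you define are orthogonal, satisfy $I_iI_j=\epsilon_{ijk}I_k$, and have fundamental forms exactly $\tfrac12\,d(r^{2}\theta_i)$ --- is where essentially all of the work in Swann's paper lives, and the positivity of $\kappa$ must be tracked there to get a positive-definite cone metric; your sketch correctly locates this but does not discharge it. Second, the action that is literally free is that of $\HH^{*}/\Z_2\supset Sp(1)/\{\pm1\}=SO(3)$ on the principal fibre; the statement for $Sp(1)$ holds with the usual convention that $-1$ acts trivially (or after lifting $\mathscr{S}(N)$ to an $Sp(1)$-bundle when possible), which matches the level of precision of the quoted statement.
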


The hyperK\"ahler potential on $\swann$ is quite special. Namely, if $\euler := \text{grad} \rho_0$ - also known as \emph{Euler vector field} - the fundamental vector fields due to permuting $Sp(1)$-action satisfy:
 \eq{
 \label{eq:hyperkahler potential}
 \I_{\xi}K^M_{\xi} = -\euler, ~~~ \xi \in S^2 \subset \mf{sp}(1) ~~~\text{and}~~~ \rho_0 = \frac{1}{2} g_{\sst M}(\euler, \euler)
 }
 
The moment map for a hyper-Hamiltonian $G$-action on $\swann$ has a simple form \cite[Corollary 3.3.1]{henrik}
 \eq{
 \label{eq:moment map swann bundle}
 \langle \mu, \xi\otimes\eta \rangle = -\frac{1}{2}g_{\sst M}\left(K^M_{\xi}, K^M_{\eta}\right), ~~ \xi \in \mf{sp}(1), ~~ \eta \in \mf{g}
 }

\begin{ex}
The flat space $\HH^{n^{\ast}}:= \HH^n \setminus \{0\}$ is the total space of Swann bundle over $\HH P^{n-1}$. Indeed, observe that $\HH^{n^{\ast}} = Sp(n) \times \R^{+}$. A permuting $Sp(1)$-action on $\HH^n$ is given by 
\eqst{
Sp(1) \times \HH^n \ni (q, h) \longmapsto h\bar{q} \in \HH^n
}
The Euler vector field $\euler = \id_{\HH^n}$ and therefore, the hyperK\"ahler potential $$\displaystyle \rho_0(h) = \frac{1}{2} ||h||^2.$$
\end{ex}

On the other hand, consider a hyperK\"ahler manifold $M$ endowed with a permuting $Sp(1)$-action.

\begin{thm}[\cite{swann}]
If $M$ admits a hyperK\"ahler potential $\rho_0: M \longrightarrow \R^+$, then for $c \in \R$, $N := \rho^{-1}_0(c)/Sp(1)$ is a quaternionic K\"ahler manifold of positive scalar curvature. Consequently, $M$ is the total space of a Swann bundle over $N$.
\end{thm}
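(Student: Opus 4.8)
The plan is to recover, from the abstract data of a permuting $Sp(1)$-action together with a hyperK\"ahler potential, the two ingredients of a Swann bundle: a homothetic Euler vector field generating an $\R^{+}$-action, and an $SO(3)$-bundle over $N$ that will turn out to be $\mathscr{S}(N)$. Throughout, note that $\rho_0^{-1}(c)$ is only nonempty for $c$ in the image $\R^{+}$, so ``$c\in\R$'' should be read as $c>0$.

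\emph{Step 1: the Euler field and its properties.} First I would set $\euler := \mathrm{grad}\,\rho_0$ and rewrite the potential condition $-d(\I_\xi d\rho_0)=2\omega_\xi$ as $\iota_{\euler}\omega_\xi = \pm\,\I_\xi d\rho_0$, which gives $\Ll_{\euler}\omega_\xi = \pm 2\omega_\xi$ and, comparing with $\Ll_{\euler}g_{\sst M}$, that $\euler$ is homothetic, $\Ll_{\euler}g_{\sst M}=2g_{\sst M}$, with $\Ll_{\euler}I_\xi=0$ and $g_{\sst M}(\euler,\euler)=2\rho_0$. In particular $\euler$ vanishes exactly on $\rho_0^{-1}(0)$, so every $c>0$ is a regular value and $\rho_0^{-1}(c)$ is a smooth hypersurface with normal proportional to $\euler$.

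\emph{Step 2: from the permuting action to an $\HH^{*}$-action.} Next I would check that the permuting action fixes $\rho_0$: for $q\in Sp(1)$ the pull-back $q^{*}\rho_0$ again satisfies the potential equation (using that $q$ permutes the $I_\xi$ and is an isometry), so $d\,\I_\xi d(q^{*}\rho_0-\rho_0)=0$ for all $\xi$, and on a hyperK\"ahler manifold this forces $q^{*}\rho_0-\rho_0$ to be locally constant, with positivity fixing it at $0$. Hence $q_{*}\euler=\euler$, the permuting action commutes with the flow of $\euler$, and one reads off $K^{M}_\xi=\I_\xi\euler$ for $\xi\in S^{2}$, i.e.\ equation \eqref{eq:hyperkahler potential}. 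Since $\R^{+}$ is central in $\HH^{*}\cong\R^{+}\times Sp(1)$, the commuting $\R^{+}$- and $Sp(1)$-actions assemble into an action of $\HH^{*}/\Z_2$ on $M$; it is locally free because $K^{M}_1,K^{M}_2,K^{M}_3,\euler$ are pointwise orthogonal and nonzero off $\rho_0^{-1}(0)$, and global freeness follows as in \cite{swann}. Its orbit space is $N=M/(\HH^{*}/\Z_2)=\rho_0^{-1}(c)/Sp(1)$, and $M\to N$ is a principal $\HH^{*}/\Z_2$-bundle.

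\emph{Step 3: the quaternionic K\"ahler structure and positivity.} On $\rho_0^{-1}(c)$ put $\Hh:=\langle K^{M}_1,K^{M}_2,K^{M}_3,\euler\rangle^{\perp}$; it is $Sp(1)$-invariant, and since $\I_\xi I_\eta\euler=I_{\xi\eta}\euler$ stays inside $\langle K^{M}_1,K^{M}_2,K^{M}_3,\euler\rangle$, each $I_\xi$ preserves $\Hh$. Thus $\Hh$ descends to $TN$; the $I_\xi$ themselves do not descend (the action permutes them), but the rank-$3$ bundle $\mathsf{Q}\subset\End(\Hh)$ they span is $Sp(1)$-invariant and descends to an almost-quaternionic structure $\mathsf{Q}\subset\End(TN)$ compatible with the quotient metric $g_{\sst N}$. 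That the Levi-Civita connection of $g_{\sst N}$ preserves $\mathsf{Q}$ (so $N$ is genuinely quaternionic K\"ahler) and that $\mathrm{Scal}_{N}>0$ I would obtain from the O'Neill formulas for the submersion $\rho_0^{-1}(c)\to N$ combined with $\nabla^{\sst M}I_\xi=0$, the positivity coming from the necessarily positive vertical curvature terms $g_{\sst M}(R(K^{M}_\xi,\,\cdot\,)\,\cdot\,,K^{M}_\eta)$. \textbf{This is the main obstacle}: it is the one place a genuine curvature computation is unavoidable, and it is essentially the content of Swann's theorem, so I would either carry out the O'Neill bookkeeping in full or invoke \cite{swann} directly.

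\emph{Step 4: identifying $M$ with $\swann$.} With $N$ quaternionic K\"ahler of positive scalar curvature, Theorem \ref{thm: swanns theorem} produces the hyperK\"ahler manifold $\swann$ with its own permuting $Sp(1)$-action and potential $\tfrac12 r^{2}$. To match $M$ with it, observe that $M/\R^{+}$ is a principal $SO(3)$-bundle over $N$ whose associated rank-$3$ bundle is exactly $\mathsf{Q}$, i.e.\ $M/\R^{+}\cong\mathscr{S}(N)$, so that $M\cong\mathscr{S}(N)\times_{\sst SO(3)}\HH^{*}/\Z_2=\swann$ as bundles over $N$. That this diffeomorphism intertwines the full hyperK\"ahler structures follows because on both sides the metric, the complex structures, the homothety $\euler$ and the potential are reconstructed from the same data on $N$ via \eqref{eq:hyperkahler potential} and \eqref{eq:moment map swann bundle}; the naturality of this reconstruction is again the content of \cite{swann}.
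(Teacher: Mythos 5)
The paper does not actually prove this statement: it is imported verbatim from Swann's work \cite{swann}, so there is no internal proof to compare against. Judged on its own terms, your outline follows the standard route of Swann's original argument, and Steps 1--2 are essentially sound: the potential condition does give $\mathrm{Hess}\,\rho_0 = g_{\sst M}$ (up to the sign conventions you flag), hence $\nabla\euler = \id$, the homothety $\Ll_{\euler}g_{\sst M} = 2g_{\sst M}$, $g_{\sst M}(\euler,\euler) = 2\rho_0$, the identity $K^M_{\xi} = \I_{\xi}\euler$ of \eqref{eq:hyperkahler potential}, and the locally free $\HH^{*}/\Z_2$-action whose orbit space is $\rho_0^{-1}(c)/Sp(1)$. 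One small caveat in Step 2: the conclusion that $q^{*}\rho_0 - \rho_0$ is locally constant from $d(\I_{\xi}d f)=0$ for all $\xi$ only forces the Hessian of $f$ to be a symmetric endomorphism commuting with all three complex structures; to kill it you need an irreducibility/holonomy argument (or the parallelism of $df$ plus connectedness), which you should state rather than assert.

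The genuine gap is that Steps 3 and 4 are not proofs but placeholders. The two assertions that constitute the actual content of the theorem --- that the quotient metric on $N$ is quaternionic K\"ahler (i.e.\ Einstein with the Levi-Civita connection preserving the rank-$3$ bundle $\mathsf{Q}$) with \emph{positive} scalar curvature, and that the resulting identification $M\cong\mathscr{S}(N)\times_{\sst SO(3)}\HH^{*}/\Z_2$ carries the hyperK\"ahler structure of $M$ to that of $\swann$ --- are precisely the points where you write ``invoke \cite{swann} directly.'' Since the theorem being proved \emph{is} the statement of \cite{swann}, this is circular as a proof, even though it is accurate as a road map. To close the gap you would need to carry out the O'Neill computation for the Riemannian submersion $\rho_0^{-1}(c)\to N$, using $\nabla\euler=\id$ and $\nabla I_{\xi}=0$ to evaluate the A- and T-tensors on the vertical distribution $\HH\cdot\euler$, and show that the resulting Ricci tensor of $N$ is a positive multiple of $g_{\sst N}$; the sign is not ``necessarily positive'' by inspection but comes out of that normalization. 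Short of that, the honest conclusion is that your write-up reduces the theorem to itself.
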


Define $Spin^G_{\varepsilon}(4):= Spin(4)\times_{\Z_2} G$, where $\Z_2$ denotes an order 2-subgroup generated by $(-1,\varepsilon)$ with the central element $\varepsilon \in G$. An action of $Spin^G_{\varepsilon}(4)$ is said to be permuting if the action of $Sp(1)_{+}\hookrightarrow Spin^G_{\varepsilon}(4)$ is permuting and the action of $Sp(1)_{-} \times G \hookrightarrow Spin^G_{\varepsilon}(4)$ is tri-holomorphic. For the rest of the article, we will study the case when $G = U(1)$.



\subsection{Generalized Dirac operator}
\label{subsec:generalized dirac operator}


Fix a $Spin^c$-structure $\pi: Q \longrightarrow X$. The Levi-Civita connection $\phi$ on the frame bundle $P_{SO(4)}$ and a connection $A$ on $P_{U(1)}$ uniquely define a connection $\ms{A}$ on $Q$. Let $\scr{A} \subset \Lambda^{1}(Q, \mf{spin}(4))^{Spin^c}$ denote the space of all connections which are the lift of the Levi-Civita connection. Let $M$ be a manifold admitting a permuting action of $Spin^c(4)$.  We define the space of \emph{generalized spinors} to be the space of smooth, equivariant maps $\Ss:= \Map(Q, M)^{Spin^c} \cong \Gamma(X, Q \times_{Spin^c}M)$.

The covariant derivative of a spinor $u \in \Ss$, w.r.t $\ms{A} \in \scr{A}$ is defined as
 \eq{
 \label{covariant derivative}
 \begin{split}
  D_{\ms{A}} : C^{\infty}(Q,M)^{Spin^c} & \lra \Hom(TQ, TM)^{Spin^c}_{hor} \cong C^{\infty}(Q, (\R^{4})^{\ast}\otimes TM)^{Spin^c} \\
 & D_{\ms{A}} u = du + K^M_{\ms{A}}|_{u}
  \end{split}
  }
where $K^M_{\ms{A}}|_{u}: TQ \rightarrow u^{\ast}TM$ is vector bundle homomorphism
\eqst{
K^M_{\ms{A}}|_{u} (v) = K^M_{A(v)}|_{u(p)}, ~~~ v\in T_p P
}
Alternatively, one can view the covariant derivative as
\al{
\label{eq:covariant derivative alt. def.}
D_{\ms{A}}:&~ C^{\infty}(Q,M)^{Spin^c} \lra C^{\infty}(Q, (\R^{4})^{*} \otimes TM)^{Spin^c} \\
&\langle D_{\ms{A}}u(p), w \rangle = du(p)(\tilde{w})
}
where, $w \in \R^{4}$, $\tilde{w}$ denotes the horizontal lift of $\pi_{\sst SO}(p)(w) \in T_{\pi(p)}X$.

Let $\psi: TTM \lra TM$ denote the Levi-Civita connector on $M$ and $\nabla^{\ms{A}, \psi}$, the linearization of $D_\ms{A}$ (see Appendix \eqref{principal bundles and covariant derivatives on associated bundles}, Lemma \eqref{linearization cov. der.})
\eq{
\label{linearization}
\nabla^{\ms{A}, \psi} : C^{\infty}(Q, TM)^{Spin^c} \lra \Hom(TQ, TM)^{Spin^c}_{hor}
 }
 The following Lemma is crucial to our construction in the next section
\begin{lem}\cite[Corollary 4.6.2]{henrik}
\label{linearized cov. der}
Let $M$ be a manifold with a permuting $Sp(1)$-action and a hyperK\"ahler potential $\rho_0$ and let $\euler: = \text{grad}~\rho_0$. Let $u \in \Ss$. Then $\euler\circ u \in \Gamma(Q, u^{\ast}TM)^{Spin^c}$. For a connection $\ms{A}$ on $Q$,
 \eq{
 D_{\ms{A}} u = \nabla^{\ms{A}, \psi} (\euler \circ u)
 }
\end{lem}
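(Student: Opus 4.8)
## Proof Plan for Lemma \ref{linearized cov. der}

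The plan is to reduce the statement to a pointwise, fibrewise identity on $M$ and then verify it using the defining property \eqref{eq:hyperkahler potential} of the hyperK\"ahler potential together with the permuting nature of the $Sp(1)$-action. First I would unwind the two sides of the claimed equation at a point $p \in Q$ using the alternative description \eqref{eq:covariant derivative alt. def.} of the covariant derivative. For the left-hand side, $\langle D_{\ms{A}}u(p), w\rangle = du(p)(\tilde{w})$, where $\tilde{w}$ is the horizontal lift with respect to $\ms{A}$ of $w \in \R^4$. For the right-hand side, I would invoke the formula for the linearization $\nabla^{\ms{A},\psi}$ from the Appendix (Lemma \eqref{linearization cov. der.}): for an equivariant section $s \in C^\infty(Q,TM)^{Spin^c}$, $\langle\nabla^{\ms{A},\psi}s(p), w\rangle = \psi\big(ds(p)(\tilde{w})\big)$, the Levi-Civita connector applied to the derivative of $s$ along the $\ms{A}$-horizontal lift.

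Next I would apply this with $s = \euler \circ u$. The key point is to compare $\psi\big(d(\euler\circ u)(p)(\tilde{w})\big)$ with $du(p)(\tilde w)$. Decompose $\tilde w$ into its $\phi$-horizontal part (lift of the Levi-Civita connection on $P_{SO(4)}$) plus a vertical part valued in $\mf{spin}(4) = \mf{sp}(1)_+ \oplus \mf{sp}(1)_-$; since $\ms{A}$ is the lift of the Levi-Civita connection twisted by a $U(1)$-connection, the vertical correction is controlled by the fundamental vector fields of the $Spin^c(4)$-action. Using the chain rule for $\psi\circ d(\euler\circ u) = (\nabla^M\euler)\circ du$ and the fact that $\euler = \text{grad}\,\rho_0$ is the Euler vector field, I would show the "horizontal'' contributions give exactly $du(p)(\tilde w)$, because $\nabla^M$ applied in the direction of the image of $du$ — which is tangent to the fibre — picks up $\euler$'s covariant derivative along $M$-directions, and the hyperK\"ahler potential relation $\I_\xi K^M_\xi = -\euler$ forces $\nabla^M_v \euler = v$ for $v$ tangent to $M$ (this is the standard fact that the Euler field of a hyperK\"ahler cone satisfies $\nabla\euler = \id$). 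The "vertical'' contributions, coming from the $\mf{spin}(4)$-valued part of $\tilde w$, reproduce precisely $K^M_{\ms{A}(\tilde w)}|_{u(p)}$ on one side; on the other side the $Sp(1)_+$-part acts through $\nabla^M_{K^M_\xi}\euler$ which by \eqref{eq:hyperkahler potential} and permuting-equivariance collapses correctly, while the $Sp(1)_-\times U(1)$-part is tri-holomorphic and fixes $\euler$, so its derivative of $\euler$ vanishes against the relevant term. Matching all pieces yields $D_{\ms{A}}u = \nabla^{\ms{A},\psi}(\euler\circ u)$.

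The main obstacle I anticipate is the bookkeeping of the vertical/fundamental-vector-field terms: one must carefully track how the $Spin^c(4) = (Sp(1)_+\times Sp(1)_- \times U(1))/\sim$ connection form acts, separate the permuting $Sp(1)_+$ factor (which genuinely moves the complex structures and interacts with $\euler$ via \eqref{eq:hyperkahler potential}) from the tri-holomorphic $Sp(1)_-\times U(1)$ factor (which preserves $\euler$), and confirm that the identity $\nabla^M_v\euler = v$ holds for \emph{all} $v \in TM$, not just fibre-tangent ones — this is where the hyperK\"ahler-cone structure is essential. A secondary technical point is ensuring the equivariance ($Spin^c$-invariance) of both sides is respected throughout, so that the pointwise identity at $p$ descends to the stated equality of sections; this should follow formally once the connector $\psi$ and the fundamental vector fields $K^M$ are recognized as $Spin^c$-equivariant by construction, but it is worth stating explicitly. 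Granting the Appendix's description of $\nabla^{\ms{A},\psi}$, the heart of the argument is the one-line Riemannian-geometric fact $\nabla^M\euler = \id_{TM}$ on a hyperK\"ahler cone, which the relation \eqref{eq:hyperkahler potential} makes transparent.
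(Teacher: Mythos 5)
The paper offers no proof of this lemma --- it is imported verbatim from \cite[Corollary 4.6.2]{henrik} --- so there is no in-text argument to compare against; judged on its own, your plan is essentially correct and isolates the right core fact. Both $D_{\ms{A}}u$ and $\nabla^{\ms{A},\psi}(\euler\circ u)$ are horizontal homomorphisms $TQ\to u^{\ast}TM$ (the first because equivariance gives $du(K^Q_\xi)=-K^M_\xi|_u$, cancelling $K^M_{\ms{A}(K^Q_\xi)}|_u$; the second because $\nabla^{\ms{A},\psi}$ is defined via $\pr_{\Hh_{\ms{A}}}$), so it suffices to compare them on $\ms{A}$-horizontal lifts $\tilde w$, where the chain rule $\psi\circ T(\euler\circ u)=(\nabla^M\euler)\circ du$ reduces the whole lemma to the single identity $\nabla^M\euler=\id_{TM}$. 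Two remarks. First, your middle discussion --- decomposing $\tilde w$ into a $\phi$-horizontal piece plus $\mf{spin}(4)$-valued vertical corrections and tracking the $Sp(1)_+$ versus $Sp(1)_-\times U(1)$ contributions --- is unnecessary: both \eqref{eq:covariant derivative alt. def.} and Lemma \ref{linearization cov. der.} already evaluate only on $\ms{A}$-horizontal vectors, so no fundamental vector fields of $Spin^c(4)$ ever enter the comparison. Second, and this is the one genuine soft spot, $\nabla^M\euler=\id_{TM}$ does not follow from \eqref{eq:hyperkahler potential} alone: the relation $\I_\xi K^M_\xi=-\euler$ is a \emph{consequence} of the hyperK\"ahler potential, not a characterization of it, and by itself only constrains $\nabla\euler$ in the directions spanned by $\euler$ and $I_l\euler$. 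The correct source is the defining property $-d(\I_\xi(d\rho_0))=2\omega_\xi$ for all unit $\xi$, which by Swann's characterization is equivalent to $\nabla d\rho_0=g_{\sst M}$, i.e.\ $\nabla^M\euler=\id_{TM}$ on all of $TM$. With that substitution the argument closes; the asserted equivariance of $\euler\circ u$ follows from the $Spin^c(4)$-invariance of $\rho_0$, as you indicate.
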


\subsection*{Clifford multiplication}
 Define $W^+$ to be the $Spin^c$-equivariant bundle $TM \lra M$ equipped with an action induced by $\vartheta_+ = [q_{+} q_{-}, g] \longmapsto [q_{+},g]$. More precisely, for any $w_{+} \in W^+$, the action is given by: 
 \eqst{
 [q_{+}, q_{-}, g] \cdot w_{+} = Tq_{+} Tg w_{+}.
 }
 Define the $W^-$ to be the $Spin^c$-equivariant bundle $TM \lra M$ equipped with the following action:
\eqst{
 [q_{+}, q_{-}, g] \cdot w_{-} = \I_{q_{-}} \I_{\bar{q}_{+}} Tq_{+} Tg ~ w_{-}
 }
 
Now Clifford multiplication is is a map of $Spin(4)$-representations
\eqst{
\mf{m}: \R^4 \lra \End(W^+ \oplus W^-)
}
We identify $\R^{4}$ with $\HH$ by mapping the standard, oriented basis  $(e_{1},e_{2},e_{3},e_{4})$ of $\R^{4}$, to $(1, \bar{i}, \bar{j}, \bar{k})$. The hyperK\"ahler structure on $\HH$ is given by $(R_{\bar{i}}, R_{\bar{j}}, R_{\bar{k}})$ and the $Spin^c$ action on $\HH$ by $[q_{+}, q_{-}, g] \cdot h = q_{-}h \bar{q}_{+}$. Define the map
 \alst{
  \mathfrak{m}: & \R^{4} \cong \Q \lra \End(W^+ \oplus W^-) \\
 & h \longmapsto \begin{bmatrix} 0 & -\I_{\bar{h}} \\ \I_{h} & 0  
 \end{bmatrix}
  }
Since $ \mf{m}(h)^{2} = -g_{\R^{4}}(h,h)\cdot id_{W^+ \oplus W^-}$, by universality property,  extends to a map of algebras $\tilde{\mf{m}}: \mc{C}l_{4} \lra \End(W^+ \oplus W^-)$. Identifying $\R^{4}~~ \text{with} ~~(\R^{4})^{*}$ we define Clifford multiplication by:
\begin{center}
$\bullet:(\R^{4})^{*} \otimes (W^+ \oplus W^-) \lra  W^+ \oplus W^-$\\
$g_{\R^{4}}(h, \cdot) (w_{+}, w_{-}) \longmapsto \mathfrak{m}(h)(w_{+}, w_{-})$.
\end{center}
This map is $Spin^c$-equivariant \cite{henrik}.

Composing Clifford multiplication $\mf{c}$ with the covariant derivative, we get the \emph{non-linear Dirac operator}:
\eq{
\Dd_{\ms{A}} u \in C^{\infty}(Q, u^{*}W^-)^{Spin^c}
}

More explicitly, from \eqref{eq:covariant derivative alt. def.}, we get
\eq{
\label{eq:gen. dirac operator explicit}
\Dd_{\ms{A}}(u) = \sum_{i=0}^{3} e_{i} \bullet D_{\ms{A}}u(\tilde{e_{i}})
}

Let $\mf{c}_{u}$ be the restriction of the Clifford multiplication $\mf{c}$ to $u^{\ast}W^+ \oplus u^{\ast}W^-$. Consider the first-order differential operator 
\eq{
\label{linearized dirac op}
\Dd^{lin}_{\ms{A}, u} := \mf{c}_u \circ \nabla^{\ms{A}, \psi}: u^{\ast}W^+ \oplus u^{\ast}W^- \rightarrow u^{\ast}W^+ \oplus u^{\ast}W^-
}

\begin{lem}\cite[Lemma 4.6.1]{henrik}
The linearization of the generalized Dirac operator, at a point $(u, A) \in \Ss \times \scr{A}$, coincides with the linear operator $\Dd^{lin}_{\ms{A}, u}$.
\end{lem}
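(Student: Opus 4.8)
The plan is to use the factorization of the non-linear Dirac operator as $\Dd_{\ms{A}}=\mf{c}\circ D_{\ms{A}}$, where $D_{\ms{A}}$ is the (non-linear) covariant derivative of \eqref{covariant derivative} and $\mf{c}=\bullet$ is Clifford multiplication, regarded as a \emph{fibrewise linear} vector-bundle homomorphism over $M$ as in \eqref{eq:gen. dirac operator explicit}, and then to differentiate along a path of spinors. Concretely, I would fix $\ms{A}$, pick $u\in\Ss$, identify the tangent space $T_u\Ss$ with $\Gamma(Q,u^{\ast}W^{+})^{Spin^c}=C^{\infty}(Q,u^{\ast}TM)^{Spin^c}$, choose a smooth path $u_t\in\Ss$ with $u_0=u$ and $\frac{d}{dt}\big|_{0}u_t=\xi$, and compute $\frac{\nabla}{dt}\big|_{0}\,\Dd_{\ms{A}}u_t$, where the differentiation in $t$ uses the connection induced by the Levi-Civita connector $\psi$ on the moving pull-back bundle $t\mapsto u_t^{\ast}W^{-}$. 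This is the notion of linearization implicit in the statement, and it is consistent with the already-established fact (proved in the Appendix) that the linearization of $D_{\ms{A}}$ is the operator $\nabla^{\ms{A},\psi}$ of \eqref{linearization}.

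Writing $\Dd_{\ms{A}}u_t=\sum_{i=0}^{3} e_i\bullet D_{\ms{A}}u_t(\tilde e_i)$ as in \eqref{eq:gen. dirac operator explicit} and applying the Leibniz rule for $\frac{\nabla}{dt}$ term by term, the derivative at $t=0$ splits into two sums: one in which $\frac{\nabla}{dt}$ falls on the factor $D_{\ms{A}}u_t(\tilde e_i)$, and one in which it falls on the Clifford multiplication $e_i\bullet(\cdot)$, now viewed as a bundle homomorphism over $M$ evaluated along the curve $t\mapsto u_t(p)$ in $M$. For the first sum I invoke the Appendix result that $\frac{\nabla}{dt}\big|_{0}D_{\ms{A}}u_t=\nabla^{\ms{A},\psi}\xi$; evaluating on $\tilde e_i$, summing over $i$, and reassembling via $\mf{c}_u$ yields exactly $\mf{c}_u(\nabla^{\ms{A},\psi}\xi)=\Dd^{lin}_{\ms{A},u}\xi$ of \eqref{linearized dirac op}. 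It therefore suffices to prove that the second sum vanishes.

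This vanishing is the one point that requires care, and I expect it to be the main (though modest) obstacle. The claim is that $\mf{c}$ — equivalently the family $h\mapsto\mf{m}(h)=\begin{bmatrix}0 & -\I_{\bar h}\\ \I_{h} & 0\end{bmatrix}$ — is a \emph{parallel} bundle homomorphism with respect to the connections induced by $\psi$ on $(\R^4)^{\ast}\otimes(W^{+}\oplus W^{-})$ and on $W^{+}\oplus W^{-}$, so that $\frac{\nabla}{dt}(e_i\bullet)\equiv 0$ along every curve in $M$. Since $W^{\pm}$ are the bundle $TM$ equipped with the twisted $Spin^c$-actions defining $W^{+}$ and $W^{-}$, and $\mf{m}$ is assembled out of the algebra homomorphism $\I_{\xi}=\xi_0\id_{TM}+\xi_1 I_1+\xi_2 I_2+\xi_3 I_3$ of \eqref{eq:algebra homomorphism}, this reduces to the defining property of a hyperK\"ahler manifold that $I_1,I_2,I_3$, hence all the $\I_{\xi}$, are covariantly constant for the Levi-Civita connection on $M$; one further notes that, because the full permuting $Spin^c(4)$-action is isometric, $\psi$ is $Spin^c$-invariant, so the twists relating the actions on $W^{\pm}$ to the standard action on $TM$ do not disturb parallelism. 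Granting this, the second sum drops out and we obtain $\big(\text{linearization of }\Dd_{\ms{A}}\text{ at }u\big)(\xi)=\mf{c}_u(\nabla^{\ms{A},\psi}\xi)=\Dd^{lin}_{\ms{A},u}\xi$, which is the assertion of the lemma. (Allowing also variations of $\ms{A}\in\scr{A}$ contributes only an extra zeroth-order term coming from $\delta\ms{A}$ acting through $K^M$ and $\mf{c}$; as $\Dd^{lin}_{\ms{A},u}$ encodes only the spinor direction, this is consistent with the statement.)
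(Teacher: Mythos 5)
The paper gives no proof of this lemma; it is quoted verbatim from \cite[Lemma 4.6.1]{henrik}, so there is no in-text argument to compare against. Your proposal is correct and is the natural argument given the machinery the paper sets up: writing $\Dd_{\ms{A}}=\mf{c}\circ D_{\ms{A}}$, the Leibniz rule reduces the linearization to (i) the Appendix result that $D_{\ms{A}}$ linearizes to $\nabla^{\ms{A},\psi}$ and (ii) the vanishing of the term where the derivative hits $\mf{m}(e_i)$, which you correctly ground in the fact that $\I_h=h_0\id+h_1I_1+h_2I_2+h_3I_3$ is parallel for the Levi-Civita connection on a hyperK\"ahler manifold, so that $e_i\bullet(\cdot)$ commutes with $\frac{\nabla}{dt}$. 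Your closing remark about variations of $\ms{A}$ contributing only a zeroth-order term is also the right way to reconcile the phrase ``at a point $(u,A)\in\Ss\times\scr{A}$'' with the fact that $\Dd^{lin}_{\ms{A},u}$ acts only in the spinor direction.
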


We are now in a position to state the Weitzenb\"ock formula
\begin{thm}\cite[Theorem 4.7.1]{henrik}
\label{weitzenbock formula}
Weitzenb\"ock formula for the generalized Dirac operator:
\eq{
\Dd^{lin, \ast}_{\ms{A}, u}\Dd_{\ms{A}}u = \nabla^{\ms{A}, \psi, \ast}D_{\ms{A}} u + \frac{s_{\sst X}}{4} \euler\circ u + \mc{Y}(F^+_{A})|_{u}
}
where $\Dd^{lin, \ast}_{\ms{A}, u}: T_u \Ss \rightarrow W^-$ is the adjoint of the linearized Dirac operator, $s_{\sst X}$ is the scalar curvature of $X$ and the vector field $\mc{Y}(F^+_{A})|_{u} = \displaystyle \sum^3_{l=1} I_l K^M_{\sst \langle F^+_{A}, \xi_l \rangle}|_{u}$ where $\{\xi_l\}$ is the basis of $\mf{sp}(1)$.
\end{thm}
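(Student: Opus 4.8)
The plan is to reduce this nonlinear identity to a classical Bochner--Lichnerowicz computation for a twisted Dirac operator, and then to read off the three curvature contributions one at a time. The entry point is Lemma~\ref{linearized cov. der}: since $D_{\ms{A}}u=\nabla^{\ms{A},\psi}(\euler\circ u)$ and, by definition, $\Dd_{\ms{A}}u=\mf{c}_u(D_{\ms{A}}u)$, we obtain
\[
\Dd_{\ms{A}}u=\mf{c}_u\circ\nabla^{\ms{A},\psi}(\euler\circ u)=\Dd^{lin}_{\ms{A},u}(\euler\circ u).
\]
Consequently $\Dd^{lin,\ast}_{\ms{A},u}\Dd_{\ms{A}}u=\Dd^{lin,\ast}_{\ms{A},u}\Dd^{lin}_{\ms{A},u}(\euler\circ u)$, so the whole statement becomes a genuinely linear Weitzenb\"ock identity for the Dirac-type operator $\Dd^{lin}_{\ms{A},u}=\mf{c}_u\circ\nabla^{\ms{A},\psi}$ applied to the distinguished section $\euler\circ u$ of $u^{\ast}W^+$. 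This is the step that makes the Euler vector field, rather than $u$ itself, the object on which the rest of the argument operates.

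First I would establish the Bochner identity $\Dd^{lin,\ast}_{\ms{A},u}\Dd^{lin}_{\ms{A},u}=\nabla^{\ms{A},\psi,\ast}\nabla^{\ms{A},\psi}+\mathfrak{R}$ on sections of $u^{\ast}W^+$ by the usual squaring argument: expand $\Dd^{lin}_{\ms{A},u}=\sum_i e_i\bullet\nabla^{\ms{A},\psi}_{e_i}$ in a local orthonormal frame, commute Clifford multiplication past the connection, and use the Clifford relation $\mf{m}(h)^2=-g_{\R^4}(h,h)\cdot\id$ to split the resulting double sum into a symmetric piece, the rough Laplacian $\nabla^{\ms{A},\psi,\ast}\nabla^{\ms{A},\psi}$, and an antisymmetric piece $\mathfrak{R}=\tfrac12\sum_{i,j}e_i\bullet e_j\bullet R(e_i,e_j)$, the Clifford contraction of the curvature $R$ of $\nabla^{\ms{A},\psi}$. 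Applying this to $\euler\circ u$ and invoking Lemma~\ref{linearized cov. der} a second time to rewrite $\nabla^{\ms{A},\psi}(\euler\circ u)=D_{\ms{A}}u$ identifies the rough-Laplacian term with $\nabla^{\ms{A},\psi,\ast}D_{\ms{A}}u$, which is exactly the first term on the right-hand side.

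It remains to compute $\mathfrak{R}(\euler\circ u)$, and here I would split $R$ into its lifted Levi--Civita part (valued in $\mf{spin}(4)=\mf{sp}(1)_+\oplus\mf{sp}(1)_-$ and acting on $u^{\ast}TM$ through the permuting and tri-holomorphic $Spin(4)$-actions), its twisting part $F_A$ (valued in $\mf{u}(1)$, acting through the fundamental vector field of the $U(1)$-action), and the pulled-back target curvature $u^{\ast}R^{TM}$. The lifted Levi--Civita part, Clifford-contracted, is handled exactly as in the Lichnerowicz argument: the first Bianchi identity together with the Clifford relation collapses it to the scalar-curvature operator, which, acting as a scalar on the section $\euler\circ u$, gives $\tfrac{s_{\sst X}}{4}\,\euler\circ u$. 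For the twisting part I would use that $\mathfrak{R}$ acts on a section of $u^{\ast}W^+$, where Clifford multiplication by a $2$-form sees only its self-dual component, so only $F^+_A$ survives; under the identification $\Lambda^2_+\cong\mf{sp}(1)$ the Clifford action of the basis element $\xi_l$ on $W^+$ is the complex structure $I_l$, while $F_A$ acts on $u^{\ast}TM$ through $K^M$, and decomposing $F^+_A=\sum_l\langle F^+_A,\xi_l\rangle\,\xi_l$ assembles this piece into $\mc{Y}(F^+_A)|_u=\sum_{l=1}^3 I_l K^M_{\langle F^+_A,\xi_l\rangle}|_u$.

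The hard part will be disposing of the remaining target-curvature contribution $\tfrac12\sum_{i,j}e_i\bullet e_j\bullet\bigl(u^{\ast}R^{TM}(e_i,e_j)\bigr)(\euler\circ u)$, and the key observation is that it vanishes identically because of the special nature of the Euler field. Since $\rho_0$ is a hyperK\"ahler potential with $\euler=\mathrm{grad}\,\rho_0$ and $\rho_0=\tfrac12 g_{\sst M}(\euler,\euler)$ by \eqref{eq:hyperkahler potential}, the field $\euler$ is the concurrent (radial) field of the underlying hyperK\"ahler cone, so that its covariant derivative is the identity, $\nabla_X\euler=X$ for all $X$. A short computation with the definition of curvature, $R^{TM}(X,Y)\euler=\nabla_X\nabla_Y\euler-\nabla_Y\nabla_X\euler-\nabla_{[X,Y]}\euler=\nabla_X Y-\nabla_Y X-[X,Y]=0$, then shows that the entire pulled-back target-curvature term annihilates $\euler\circ u$. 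Verifying the identity $\nabla\euler=\id$ from the potential data --- equivalently, that the Hessian of $\rho_0$ equals $g_{\sst M}$ --- and keeping it aligned with the quaternionic Clifford bookkeeping of the previous step is the principal obstacle; once it is in place, the scalar-curvature and twisting terms are precisely the right-hand side of the asserted formula.
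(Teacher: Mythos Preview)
The paper does not prove this statement: Theorem~\ref{weitzenbock formula} is quoted verbatim from \cite[Theorem~4.7.1]{henrik} and is used as a black box in Section~\ref{sec:const. scalar curvature}. There is therefore no proof in the paper to compare your proposal against.

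That said, your outline is the standard route and is structurally sound. The reduction to a linear Weitzenb\"ock identity via Lemma~\ref{linearized cov. der}, so that the whole computation is about $\euler\circ u$ rather than $u$, is exactly the intended use of that lemma. Your observation that $\nabla\euler=\id_{TM}$ (equivalently $\mathrm{Hess}\,\rho_0=g_{\sst M}$, which is precisely what ``hyperK\"ahler potential'' encodes) and hence $R^{TM}(\cdot,\cdot)\euler=0$ is the correct mechanism for killing the pulled-back target curvature; without it the formula would acquire an unwanted term. One point to tighten: the curvature of $\nabla^{\ms{A},\psi}$ on $u^{\ast}W^+$ is not simply a direct sum of ``lifted Levi--Civita,'' ``$F_A$,'' and ``$u^{\ast}R^{TM}$''; the $\mf{spin}(4)$-part of $F_{\ms{A}}$ acts on $W^+$ through the differential of the permuting $Sp(1)_+$-action on $TM$ (with $Sp(1)_-$ acting trivially on $W^+$), and it is this specific representation that makes the Clifford-contracted Riemannian piece collapse to $\tfrac{s_{\sst X}}{4}\cdot\id$. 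Making that step precise is the only place where the nonlinear setup genuinely differs from the classical Lichnerowicz computation.
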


\subsection*{Generalized Seiberg-Witten equations}
Let $\mu$ denote hyperK\"ahler moment map for the hyperK\"ahler $U(1)$-action on $M$ and $F_{\ms{A}} \in \Map(Q, \Lambda^2(\R^4)^{\ast}\otimes\mf{sp}(1))^{Spin^c}$ denote the curvature of the connection $\ms{A}$. The \emph{generalized Seiberg-Witten equations} for a pair $(u, \ms{A}) \in \Ss \times \scr{A}$, in dimension 4, are
\begin{equation}
\label{eq:gen. seiberg-witten}
\left\{\begin{array}{l}
\Dd_{\ms{A}} u = 0 \\
F^{+}_{\ms{A}} - \mu \circ u = 0
\end{array}\right.
\end{equation}


\section{K\"ahler structure on 4-manifolds and Swann bundles}
\label{sec:kahler and symplectic structures} 
In this section, we give the proof of Theorem \ref{thm:Swann bundles kahler structure}. Assume that $\swann$ is endowed with a hyper-Hamiltonian action of $U(1)$ and let $\mu: M \lra \mf{sp}(1)^{\ast}$ denote the associated moment map \eqref{eq:moment map swann bundle}. Observe that since the $Sp(1)$ action on $\swann$ is free, $\euler \circ u\in \Gamma(Q, u^{\ast}W^+)^{Spin^c}$ is a non-vanishing section.

\begin{proof}[Proof of Theorem \ref{thm:Swann bundles kahler structure}]

Let $(u, \ms{A}) \in \Ss \times \scr{A}$ be such that $D_{\ms{A}} u = 0$ and the image of $u$ does not contain any fixed points of the $U(1)$-action. This has the consequence that $d(\mu \circ u) = \langle d\mu, D_{\ms{A}} u \rangle = 0$, which implies that $\mu \circ u$ is constant. Under the isomorphism $\Phi: \mf{sp}(1)^{\ast} \rightarrow \Lambda^2_+(\R^4)^{\ast}$, the map $\mu \circ u: Q \lra \mf{sp}(1)$ defines a non-degenerate, self-dual 2-form $\omega:= \Phi(\mu\circ u)$ on $X$. We treat $\omega$ as an element in $\Map(Q, \Lambda^2_+(\R^4)^{\ast})^{Spin^c}$. 
\al{
\label{eqtn. 1}
\nabla^{\ms{A}, \psi}(\omega\bullet (\euler \circ u))
& = \left(D_{\phi} \omega \right)\bullet \euler\circ u + \omega \bullet \nabla^{\ms{A}, \psi}(\euler \circ u) \nonumber  \\
& = \left(D_{\phi} \omega \right)\bullet \euler\circ u + \omega \bullet D_{\ms{A}} u \nonumber  \\
& = \left(D_{\phi} \omega \right)\bullet \euler\circ u\\ \nonumber
}
 
Consider the left hand side of \eqref{eqtn. 1}
 \alst{ \omega\bullet (\euler \circ u)
 &= (\mu_1 \circ u) \cdot \left(e_0\wedge e_1 + e_2 \wedge e_3 \right) \bullet \euler\circ u \\
 &+ (\mu_2 \circ u) \cdot \left(e_0\wedge e_2 - e_1 \wedge e_3 \right) \bullet \euler\circ u \\
 &+ (\mu_3 \circ u) \cdot \left(e_0\wedge e_3 + e_1 \wedge e_2\right) \bullet \euler\circ u \\
 &= (\mu_1 \circ u) \cdot \left((e_0 \cdot e_1) \bullet \euler\circ u + (e_2 \cdot e_3) \bullet \euler\circ u\right) \\
 &+ (\mu_2 \circ u) \cdot \left((e_0 \cdot e_2) \bullet \euler\circ u - (e_1 \cdot e_3) \bullet \euler\circ u\right)\\
 &+ (\mu_3 \circ u) \cdot \left((e_0 \cdot e_3) \bullet \euler\circ u + (e_1 \cdot e_2) \bullet \euler\circ u\right)\\
 &= \sum^3_{l=1}(\mu_l \circ u)~I_l ~ (\euler\circ u)
  } 
 
Since $\mu \circ u$ is constant, this implies that $\mu_l \circ u$ are constant for all $l = 1,2,3$. Therefore the left hand side of the eq. \eqref{eqtn. 1} reads
\al{
\label{eqtn. 2}
\nabla^{\ms{A}, \psi}(\omega\bullet \euler \circ u) 
&= \nabla^{\ms{A}, \psi} \left(\sum^{3}_{l=1} (\mu_l \circ u) ~ I_{l} \left(\euler \circ u \right) \right) \nonumber\\
&= \sum^{3}_{l=1} (\mu_l \circ u)~ \nabla^{\ms{A}, \psi}\left(I_{l} \left(\euler \circ u \right) \right) \\
&= \sum^{3}_{l=1} (\mu_l \circ u)~ I_{l}(D_{\ms{A}} u)\\
&= 0
}
Note that we have used Lemma \ref{linearized cov. der} in the third step.
Substituting in \eqref{eqtn. 1} we get 
\eq{
\label{eqtn. 3}
\left(D_{\phi}\omega \right)\bullet \euler\circ u = 0
}

We can think of $D_{\phi}\omega$ as a one-form with values in the space of skew-adjoint, traceless endomorphisms of $W^+ \oplus W^-$ 
 \eqst{
 T_x X \ni v \longmapsto D_{\phi}\omega(v)|_x \bullet (\cdot)
 }
 Since, from \eqref{eqtn. 3}, one of the eigenvalues of $D_{\phi} \omega|_x$ is zero, it follows that the other one is zero as well. This is true for all $x \in X$. Thus we get $D_{\phi} \omega = 0$ or alternatively, interpreting $\omega \in \Gamma(X, \Lambda^2_+(X))$, we have $\nabla \omega = 0$.
\subsection*{Complex structure}
If $\beta$ is some non-degenerate, self-dual 2-from on $X$ such that $D_{\phi}\beta = 0$, then one can suitably modify the metric, from $g_{\sst X} \leadsto g'_{\sst X}$ so that $|\beta| = \sqrt{2}$. Therefore if $\mf{s}\Lambda^2_+(X)$ (\emph{a.k.a twistor space}) denotes the sphere bundle in $\Lambda^2_+(X)$, then $\beta \in \Gamma(X, (\mf{s}\Lambda^2_+(X))')$ and determines an almost-complex structure $J_{\beta}$ on $X$. As the Levi-Civita connection remains unchanged, we get $D_{\phi}\beta = 0$ and therefore $J_{\beta}$ is integrable. Thus $\omega$ defines a K\"ahler structure on $X$, compatible with the metric $g'_{\sst X}$. 
\end{proof}


\section{K\"ahler and symplectic structures for case of hyperK\"ahler reduction}
\label{sec:case of HK reduction}

In this section, we consider the case where $M$ is a hyperK\"ahler reduction of some flat-space $\HH^{n}$. For such hyperK\"ahler manifolds, it is possible to construct both the K\"ahler and symplectic structures on $X$. The idea here involves \emph{lifting} the Dirac equation suitably for maps taking values in $\HH^{n}$.

Let $H \subset Sp(n)$ be a compact, unitary group, acting hyper-Hamiltonianly on $\HH^{n}$ and $\mu_H: \HH^{n} \lra \mf{sp}(1)^{\ast}\otimes \mf{h}^{\ast}$ denote the associated moment map. Assume also that $H$ acts freely and properly on $\mu^{-1}_H(0)$ and let $M_H:=\mu^{-1}_H(0)/H$. If $Sp(1)$-preserves $\mu^{-1}_H(0)$, then, the permuting action of $Sp(1)$ on $\HH^n$ descends to a permuting action on $M_H$. Given a hyper-Hamiltonian $U(1)$-action on $\HH^{n}$, that commutes with the $H$-action and preserves $\mu^{-1}_H(0)$, it descends to a hyper-Hamiltonian action on $M_H$. Consider the following diagram
\eq{
\label{diag:lift of spinor}
\begin{tikzpicture}[->, node distance=2.5cm, auto, shorten >=1pt, every edge/.style={font=\footnotesize, draw}, fill=blue]
         \node (Hn+1)     {$Q$};
         \node (On+1) [right of=Hn+1]    {$M_H$};
         \node (HPn)  [above of=Hn+1]    {$P_{\widehat{H}}$};
         \node (Xn-1) [right of=HPn]     {$P \subset \HH^{n}$};
         \node (X)    [below of=Hn+1]    {$X$};

         \draw        (Hn+1) -- node [left, midway] {$\pi$} (X);        
         \draw        (Hn+1) -- node [above, midway] {$u$} (On+1);
         \draw        (HPn)  -- node [left, midway] {$\pi_{1}$} (Hn+1);
         \draw        (Xn-1) -- node [right, midway] {$\pi_{2}$} (On+1);
         \draw        (HPn)  -- node [above] {$\widehat{u}$} (Xn-1); 
         
         \end{tikzpicture}
}
Here $\pi_1$ is a $Spin^c$-equivariant submersion, $P:= \mu^{-1}_H(0)$ is the principal $H$-bundle over $M_H$. Note that $P_{\widehat{H}} \lra X$ which is a principal $H$-bundle over $Q$. Let $\widehat{u}$ be a smooth map. Define $u: Q \longrightarrow M_H$ as 
\eqst{
u(q) = \pi_2 (\widehat{u}(p)), ~~ q\in Q, ~~ p \in \pi^{-1}_1(q)
}

Clearly, the diagram commutes. On the other hand, given a smooth spinor $u: Q \lra M_H$, it defines a principal $H$-bundle over $Q$ via pull-back of $P$. The pull-back of the canonical connection $\ms{a}$ on $P$, defined as 
\eqst{
K^{P, H}_{\ms{a}}|_{p}(v) = -\pr^{\text{im}K^{P, H}}(v), ~~ v\in T_pP
} 
by $\widehat{u}$ along with the connection $\ms{A}$ on $Q$ uniquely define a connection $\mr{A}$ on $P_{\widehat{H}}$
\eq{
\label{eq:unique connection on P_H}
\mr{A} = \pi^{\ast} \ms{A} \oplus \mr{A}_{\mf{h}} \in \Lambda^{1}\left(P_{\widehat{H}}, ~\mf{spin}(4)\oplus\mf{h}\right)^{H \times Spin^c(4)}
} 
where $\mr{A}_{\mf{h}} = \widehat{u}^{\ast}\ms{a} - \langle \pi^{\ast} \ms{A}, \iota_{\mf{spin}(4)} \widehat{u}^{\ast}\ms{a}  \rangle$.
\begin{prop}
\label{prop:1-1 correspondence}
Then, there is a 1-1 correspondence between
\eq{
\label{eq:1-1 correspondence harmonic spinors}
\{(\widehat{u}, \mr{A})~|~ \Dd_{\mr{A}}\widehat{u} = 0, ~~ \mu_H \circ \widehat{u} = 0\}~~~\text{and}~~~\{(u, \ms{A})~|~ \Dd_{\ms{A}}u = 0\}
}
where, $u$ is the projection of $\widehat{u}$ to $Q$, and also between 
\eq{
\label{eq:1-1 correspondence cov. const. spinors}
\{(\widehat{u}, \mr{A})~|~D_{\mr{A}}\widehat{u} = 0, ~~ \mu_H \circ \widehat{u} = 0\}~~~\text{and}~~~\{(u, \ms{A})~|~D_{\ms{A}}u = 0\}
}
\end{prop}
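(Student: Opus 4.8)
The plan is to reduce the statement to one elementary observation: along $\mr A$-horizontal directions the covariant derivative of a spinor is simply its differential, while the diagram \eqref{diag:lift of spinor} commutes, $\pi_2\circ\widehat{u}=u\circ\pi_1$. The two correspondences \eqref{eq:1-1 correspondence harmonic spinors} and \eqref{eq:1-1 correspondence cov. const. spinors} can then be treated together, the second differing from the first only by omitting the final application of Clifford multiplication.

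First I would record the linear algebra of the hyperK\"ahler quotient. Writing $P=\mu^{-1}_H(0)$, at a point $p\in P$ one has $T_pP=\ker(d\mu_H)_p$, and this splits orthogonally as $T_pP=\mf h\cdot p\,\oplus\,\Hh_p$, where $\mf h\cdot p$ is the tangent to the $H$-orbit and $\Hh_p:=(\mf h\cdot p)^{\perp}\cap\ker(d\mu_H)_p$ is the horizontal space of the canonical connection $\ms a$. The standard facts are that $\Hh_p$ is $\I$-invariant and that $d\pi_2$ restricts to an isometry $\Hh_p\to T_{\pi_2(p)}M_H$ intertwining $\I|_{\Hh_p}$ with $\I$ on $T_{\pi_2(p)}M_H$; this is exactly what equips $M_H$ with its hyperK\"ahler structure. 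Since the bundles $W^{\pm}$ and the Clifford multiplication $\bullet$ are built only from $\I$ together with the permuting $Sp(1)$- and tri-holomorphic $U(1)$-actions --- which by hypothesis descend from $\HH^n$ to $M_H$ --- it follows that $\widehat{u}^{\ast}\Hh\subset\widehat{u}^{\ast}T\HH^n$ is a Clifford submodule and that $d\pi_2$ intertwines Clifford multiplication on $\widehat{u}^{\ast}\Hh$ with Clifford multiplication on $u^{\ast}(W^+\oplus W^-)$.

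The heart of the argument is the identification of covariant derivatives. Fix $w\in T_xX$, let $\tilde{w}\in T_pP_{\widehat{H}}$ be its $\mr A$-horizontal lift, and set $\tilde{w}_Q:=d\pi_1(\tilde{w})$. From $\mr A=\pi^{\ast}\ms A\oplus\mr A_{\mf h}$ and $\mr A(\tilde{w})=0$ we get $\pi^{\ast}\ms A(\tilde{w})=0$, so $\tilde{w}_Q$ is the $\ms A$-horizontal lift of $w$; and since $\pi^{\ast}\ms A$ annihilates $\tilde{w}$, so does the correction term $\langle\pi^{\ast}\ms A,\iota_{\mf{spin}(4)}\widehat{u}^{\ast}\ms a\rangle$ in $\mr A_{\mf h}=\widehat{u}^{\ast}\ms a-\langle\pi^{\ast}\ms A,\iota_{\mf{spin}(4)}\widehat{u}^{\ast}\ms a\rangle$, whence $0=\mr A_{\mf h}(\tilde{w})=\ms a\big(d\widehat{u}(\tilde{w})\big)$; that is, $d\widehat{u}(\tilde{w})$ is $\ms a$-horizontal and therefore lies in $\Hh$. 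Hence $D_{\mr A}\widehat{u}(\tilde{w})=d\widehat{u}(\tilde{w})\in\widehat{u}^{\ast}\Hh$, and by $\pi_2\circ\widehat{u}=u\circ\pi_1$,
\[ d\pi_2\big(D_{\mr A}\widehat{u}(\tilde{w})\big)=d\pi_2\big(d\widehat{u}(\tilde{w})\big)=du(\tilde{w}_Q)=D_{\ms A}u(\tilde{w}_Q). \]
Thus $D_{\mr A}\widehat{u}$ is $\widehat{u}^{\ast}\Hh$-valued and is carried by $d\pi_2$ onto $D_{\ms A}u$. Applying Clifford multiplication and using that $\widehat{u}^{\ast}\Hh$ is a Clifford submodule, $\Dd_{\mr A}\widehat{u}=\sum_i e_i\bullet D_{\mr A}\widehat{u}(\tilde{e}_i)$ is again $\widehat{u}^{\ast}\Hh$-valued, and $d\pi_2\big(\Dd_{\mr A}\widehat{u}\big)=\sum_i e_i\bullet D_{\ms A}u(\tilde{e}_{i,Q})=\Dd_{\ms A}u$. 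Since $d\pi_2$ is fibrewise injective on $\widehat{u}^{\ast}\Hh$, this gives $\Dd_{\mr A}\widehat{u}=0\iff\Dd_{\ms A}u=0$, and stopping one step earlier, $D_{\mr A}\widehat{u}=0\iff D_{\ms A}u=0$.

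It remains to observe that the two assignments defining the correspondence are mutually inverse: to $(u,\ms A)$ one attaches $P_{\widehat{H}}=u^{\ast}P$, the tautological bundle map $\widehat{u}$ (which satisfies $\mu_H\circ\widehat{u}=0$ because it lands in $P$) and $\mr A$ as in \eqref{eq:unique connection on P_H}; conversely, if $\mu_H\circ\widehat{u}=0$ then $\widehat{u}$ maps $H$-equivariantly into $P$, descends to $u:=\pi_2\circ\widehat{u}$ with $P_{\widehat{H}}\cong u^{\ast}P$, and $\mr A$ recovers a unique $\ms A\in\scr A$ through its $\mf{spin}(4)$-component. The only genuine bookkeeping I anticipate lies in confirming that the correction term in $\mr A_{\mf h}$ drops out on $\mr A$-horizontal vectors --- so that $\mr A$-horizontality there coincides with $\ms a$-horizontality of $d\widehat{u}$ --- and in the routine verification that the $Spin^c(4)$-equivariant structures induced on $\widehat{u}^{\ast}\Hh$ and on $u^{\ast}(W^+\oplus W^-)$ coincide; neither is a real obstacle once the splitting $T_pP=\mf h\cdot p\oplus\Hh_p$ and the definitions of $D_{\ms A}$ and of the canonical connection $\ms a$ are in place.
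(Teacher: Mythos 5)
Your argument is correct, but it takes a genuinely different route from the paper's, and the difference is precisely in what is assumed about the $\mf{h}$-component of $\mr{A}$. You take $\mr{A}_{\mf{h}}$ to be the canonical one of \eqref{eq:unique connection on P_H} from the outset, note that the correction term $\langle\pi^{\ast}\ms{A},\iota_{\mf{spin}(4)}\widehat{u}^{\ast}\ms{a}\rangle$ dies on $\mr{A}$-horizontal vectors, and conclude directly that $D_{\mr{A}}\widehat{u}$ is valued in the $\I$-invariant distribution $\Hh=\ker d\mu_H\cap(\im K^{\HH^{n},H})^{\perp}$, on which $d\pi_2$ is an injective map of Clifford modules; both correspondences then follow in one stroke. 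The paper does not presuppose the form of $\mr{A}_{\mf{h}}$: its Step 1 uses only $\mu_H\circ\widehat{u}=0$ together with the moment-map identity $\langle d\mu_H(\cdot),\xi\otimes\eta\rangle=\langle\I_{\xi}K^{\HH^{n},H}_{\eta},\cdot\rangle$ to show that $\I_{\xi}D_{\mr{A}}\widehat{u}(v)\in\Hh$ for imaginary $\xi$ --- at that stage $D_{\mr{A}}\widehat{u}(v)$ itself is only known to lie in $\ker d\mu_H$ and may still have a component along the $H$-orbit --- and Step 2 then uses the quaternionic form of the Dirac equation, $D_{\mr{A}}\widehat{u}(\tilde{e}_0)=\sum_i I_iD_{\mr{A}}\widehat{u}(\tilde{e}_i)$, to kill that orbit component and thereby \emph{derive} that $\mr{A}_{\mf{h}}$ must be the pullback of the canonical connection. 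That extra content is what makes the assignment $(\widehat{u},\mr{A})\mapsto(u,\ms{A})$ injective if the left-hand sets in \eqref{eq:1-1 correspondence harmonic spinors} and \eqref{eq:1-1 correspondence cov. const. spinors} are read as ranging over all connections whose $\mf{spin}(4)$-part is $\pi^{\ast}\ms{A}$ --- a reading the paper's proof evidently intends, since it states the canonical form of $\mr{A}_{\mf{h}}$ as a conclusion and only fixes it by assumption after the proposition. So: under the restricted reading your proof is complete and cleaner (it bypasses Step 1 entirely and treats the two correspondences uniformly); to cover the unrestricted reading you would need to add the paper's Step 1--2 observation, equivalently that $\I_{\xi}(\im K^{\HH^{n},H})\perp\im K^{\HH^{n},H}$ along $\mu_H^{-1}(0)$, so that the Dirac equation forces the orbit components of $D_{\mr{A}}\widehat{u}(\tilde{e}_i)$ to vanish.
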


\begin{proof}
To begin with, note that the condition $\mu_H \circ \widehat{u} = 0$ means that the map $\widehat{u}$ is non-vanishing, since the action of $H$ on $\mu^{-1}_H (0)$ is free.

For $h\in \HH^{n}$, define $\Hh_h := \ker d\mu_H(h) \cap (\im K^{\HH^{n}, H})^{\perp}$. It is easy to see that if $\mu_H(h) = 0$, then $\Hh_h$ is just the horizontal subspace over $h \in \mu^{-1}_G(0)$ w.r.t the canonical connection $\ms{a}$. 

We will prove the proposition in three steps. In what follows, we shall denote the $H$ and $Spin^c$-components of $\mr{A}$ by $\mr{A}_{\mf{h}}$ and $\widehat{\ms{A}}$ respectively.

\vspace{0.3cm}
\paragraph{\bf Step 1:}
\label{para:step 1}
In the first step we will prove that $\I_{\xi}D_{\mr{A}}\widehat{u}(v) \in \Hh_{\widehat{u}}$ for every $\xi \in \mf{sp}(1)$ and $v \in \Hh_{\mr{A}} \subset T P_{\widehat{H}}$. Indeed, if $\mu_H \circ \widehat{u} = 0$, then $d\widehat{u}(v) \in \ker d\mu_H(\widehat{u}(p))$. Also, $K^{\HH^{n}, H}_{\mr{A}_{\mf{h}}}|_{\widehat{u}} \in \ker d\mu_H(\widehat{u}(p))$ and $K^{\HH^{n}, Spin^c}_{\widehat{\ms{A}}}|_{\widehat{u}} \in \ker d\mu_H(\widehat{u}(p))$. Therefore, $D_{\mr{A}}\widehat{u}(v) \in \ker d\mu_H(\widehat{u}(p))$. Consequently
\eqst{
0
=\langle d\mu_H (D_{\mr{A}}\widehat{u}(v)), \xi \otimes \eta\rangle 
= \langle \I_{\xi} K^{\HH^{n}, H}_{\eta}|_{\widehat{u}(p)}, D_{\mr{A}}\widehat{u}(v) \rangle
= - \langle K^{\HH^{n}, H}_{\eta}|_{\widehat{u}(p)}, \I_{\xi} D_{\mr{A}}\widehat{u}(v) \rangle
}
for $\xi \in \mf{sp}(1), ~ \eta \in \mf{h}$. In other words, $\I_{\xi} D_{\mr{A}}\widehat{u}(v) \in (\im K^{\HH^{n}, H})^{\perp}$ for all $\xi \in \mf{sp}(1)$. Also, for $\xi' \in \mf{sp}(1)$,
\eqst{
\langle d\mu_G (\I_{\xi'}D_{\mr{A}}\widehat{u}(v)), \xi \otimes \eta\rangle = \langle d\mu_G (D_{\mr{A}}\widehat{u}(v)), [\xi', \xi] \otimes \eta\rangle = 0
}
which implies $\I_{\xi} D_{\mr{A}}\widehat{u}(v) \in \ker d\mu_G(\widehat{u}(p))$ for all $\xi \in \mf{sp}(1)$. Therefore, $\I_{\xi} D_{\mr{A}}\widehat{u}(v) \in \Hh_{\widehat{u}}$.

\vspace{0.3cm}

\paragraph{\bf Step 2:}
\label{para:step 2}

If $\Dd_{\mr{A}} \widehat{u} = 0$, then from \eqref{eq:gen. dirac operator explicit}, we have
\eqst{
0 = D_{\mr{A}}\widehat{u}(\tilde{e_0}) - \sum^3_{i=1} I_{i}D_{\mr{A}} \widehat{u}(\tilde{e_i})
}

From Step 1, we get $D_{\mr{A}}\widehat{u}(\tilde{e_0}) \in \Hh_{\widehat{u}}$. It follows that $D_{\mr{A}}\widehat{u}(\tilde{e_i}) \in \Hh_{\widehat{u}}$ for all $i=1,2,3$. Consequently, for any $v \in \Hh_{\mr{A}}, ~~\pr^{\im K^{\HH^{n}, H}}D_{\mr{A}}\widehat{u}(v) = 0$ and we get $\displaystyle K^{\HH^{n}, H}_{\mr{A}_{\mf{h}}(v)} = - \pr^{\im K^{\HH^{n}, H}} d\widehat{u}(v)$. In other words, the $\mf{h}$-connection component of $\mr{A}$ is just the pull-back of the canonical connection on the Riemannian submersion $\mu_H^{-1}(0)$.

Since the diagram commutes, $d\pi_2(D_{\mr{A}}\widehat{u}) = D_{\ms{A}}u$. Also, as $D_{\mr{A}}\widehat{u}(\tilde{e_i}) \in \Hh_{\widehat{u}}$ for all $i=0, 1,2,3$
\alst{
0
&=d\pi_2 (\Dd_{\mr{A}}\widehat{u})\\
&= d\pi_2 \left(D_{\mr{A}}\widehat{u}(\tilde{e_0}) - \sum^3_{i=1} \iota^{\ast}I_{i}~D_{\mr{A}} \widehat{u}(\tilde{e_i})\right)\\
&= d\pi_2 (D_{\mr{A}}\widehat{u}(\tilde{e_0})) - \sum^3_{i=1} \pi^{\ast}_2 \tilde{I_{i}}~ d\pi_2(D_{\mr{A}} \widehat{u}(\tilde{e_i}))\\
&= D_{\ms{A}}u(\tilde{e_0}) - \sum^3_{i=1} \tilde{I_{i}}~ D_{\ms{A}}u (\tilde{e_i})\\
&= \Dd_{\ms{A}}u
}
Thus, $\Dd_{\mr{A}}\widehat{u} = 0$ implies $\Dd_{\ms{A}}u = 0$. On the other hand if $\displaystyle K^{\HH^{n}, H}_{\mr{A}_{\mf{h}}(v)} = - \pr^{\im K^{\HH^{n}, H}} d\widehat{u}(v)$, then $D_{\mr{A}}\widehat{u} \in \Hh_{\widehat{u}}$ and so $d\pi_2 (\Dd_{\mr{A}}\widehat{u}) = \Dd_{\ms{A}}u$. Therefore, if $\Dd_{\ms{A}}u = 0$, it implies that $\Dd_{\mr{A}}\widehat{u} \in \im K^{\HH^{n}, H}$. But since, 
\eqst{
\Dd_{\mr{A}}\widehat{u}
= D_{\mr{A}}\widehat{u}(\tilde{e_0}) - \sum^3_{i=1} \pi^{\ast}_2 I_{i}~D_{\mr{A}} \widehat{u}(\tilde{e_i}) \in \Hh_{\widehat{u}}
}
it follows that $\Dd_{\mr{A}}\widehat{u} \in (\im K^{\HH^{n}, H})^{\perp}$ and so $\Dd_{\mr{A}}\widehat{u} = 0$.

\vspace{0.3cm}

\paragraph{\bf Step 3:}
\label{para:step 3}
Using an argument verbatim to the one in Step 2 above, we can prove that there is a 1-1 correspondence between
\eq{
\{D_{\mr{A}}\widehat{u} = 0, ~~ \mu_H \circ \widehat{u} = 0\}~~~\text{and}~~~\{D_{\ms{A}}u = 0\}
}

This proves the statement.

\end{proof}


\subsection*{K\"ahler and symplectic structures}

Proposition \ref{prop:1-1 correspondence} allows us to use the tools for the usual $Spin$-Dirac operator. 

For the rest of the section, assume that the $\mf{h}$-component of a connection $\mr{A}$ is given by $\mr{A}_{\mf{h}}$ as in \eqref{eq:unique connection on P_H}.

\begin{lem}
\label{lem:existence of connection}
Given a non-vanishing spinor $\widehat{u}: P_{\widehat{H}} \lra \HH^{n}$, there exists a unique connection $\mr{A}$ such that $\Dd_{\mr{A}}\widehat{u} = 0$.
\end{lem}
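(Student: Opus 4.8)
The plan is to reduce the statement to a pointwise linear-algebra problem along the fibres of $\pi_1: P_{\widehat H} \to Q$, using the fact that a connection $\mr A$ compatible with the prescription \eqref{eq:unique connection on P_H} is, by construction, completely determined by its $\mf{spin}(4)$-component $\widehat{\ms A}$ — equivalently, by the underlying connection $\ms A$ on $Q$ — together with $\widehat u$ itself. So ``finding $\mr A$'' really means ``finding $\ms A \in \scr A$'', and since $\scr A$ is an affine space modelled on $\Lambda^1(X, \mf{sp}(1))$ (the Levi-Civita part being fixed), the unknown is a single $\mf{sp}(1)$-valued $1$-form. First I would write out, using \eqref{eq:gen. dirac operator explicit}, the Dirac operator $\Dd_{\mr A}\widehat u = \sum_{i=0}^3 e_i\bullet D_{\mr A}\widehat u(\tilde e_i)$ and split $D_{\mr A}\widehat u = d\widehat u + K^{\HH^n, Spin^c}_{\widehat{\ms A}}|_{\widehat u} + K^{\HH^n, H}_{\mr A_{\mf h}}|_{\widehat u}$ into the part depending on $\widehat u$ alone (i.e.\ with the Levi-Civita lift $\phi$ plugged in) and the part linear in the unknown $\mf{sp}(1)$-form; the latter contributes a term of the form $\sum_i e_i \bullet K^{\HH^n, Spin^c}_{a(\tilde e_i)}|_{\widehat u}$ with $a \in \Lambda^1(X,\mf{sp}(1))$ the perturbation.

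Next I would analyse the map $a \mapsto \sum_i e_i\bullet K^{\HH^n, Spin^c}_{a(\tilde e_i)}|_{\widehat u}$ fibrewise over $X$: at each point this is a linear map from $(\R^4)^\ast \otimes \mf{sp}(1) \cong \mf{sp}(1)_- \otimes \mf{sp}(1)_+$ (after the usual identification $\R^4 \cong \HH$ and $\Lambda^1 \cong \mf{sp}(1)_-\oplus\R$, where only the self-dual/$\mf{sp}(1)_+$ piece of the connection matters for the permuting $Sp(1)_+$) into $\widehat u^\ast W^-$. The key computation is that, because the $Sp(1)_+$-action on $\HH^n$ is the \emph{permuting} one and $\euler = \mathrm{id}_{\HH^n}$ is non-vanishing (this is where $\mu_H\circ\widehat u=0$, hence $\widehat u\neq 0$, is used), this linear map is surjective onto the relevant summand of $\widehat u^\ast W^-$, and its kernel is exactly the anti-self-dual part of $a$ together with the $\R$-trace part — i.e.\ the part of the connection that is invisible to the Clifford-contracted derivative. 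Concretely, one shows that $\sum_i e_i \bullet K^{\HH^n,Spin^c}_{a(\tilde e_i)}|_{\widehat u}$ recovers, up to an explicit nonzero scalar, the action of the self-dual part of $a$ on $\euler\circ\widehat u$ via $\sum_l \mu_l \cdot I_l$, which by non-degeneracy (the argument is the same eigenvalue/$\I_\xi$ calculation as in the proof of Theorem \ref{thm:Swann bundles kahler structure}) is an isomorphism. Hence there is a unique self-dual $\mf{sp}(1)$-valued $1$-form $a$ solving $\sum_i e_i\bullet K^{\HH^n,Spin^c}_{a(\tilde e_i)}|_{\widehat u} = -\Dd_{\phi,\mr A_{\mf h}}\widehat u$, and this $a$ depends smoothly on the base point; adding it to the Levi-Civita lift gives the desired $\mr A$, unique among connections of the prescribed form.

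I expect the main obstacle to be the fibrewise surjectivity/injectivity claim: one must check carefully that the Clifford contraction $\sum_i e_i\bullet K^{\HH^n,Spin^c}_{(\cdot)(\tilde e_i)}|_{\widehat u}$, restricted to self-dual $\mf{sp}(1)$-valued forms, is a pointwise linear isomorphism onto $\widehat u^\ast W^-$ (or the appropriate subbundle), and that the anti-self-dual and scalar parts genuinely drop out. This is essentially a representation-theoretic statement about how $Sp(1)_+$ acts, combined with the hyperK\"ahler identity \eqref{eq:hyperkahler potential} $\I_\xi K^M_\xi = -\euler$; the delicate point is keeping track of which copy of $Sp(1)$ ($Sp(1)_+$ versus $Sp(1)_-$) acts in the Clifford module $W^-$ versus in the permuting action, so that the two $\mf{sp}(1)$-factors in $a$ are correctly matched. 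Once this is settled, existence and uniqueness follow immediately, and smoothness in the base point is automatic from the smoothness of $\widehat u$ and of the structure maps. A secondary, more bookkeeping-level, issue is verifying that the resulting $\mr A$ indeed has the form \eqref{eq:unique connection on P_H}, i.e.\ that solving for the $\mf{spin}(4)$-part does not disturb the prescribed $\mf h$-part $\mr A_{\mf h}$; but since $\mr A_{\mf h}$ is defined purely in terms of $\widehat u^\ast\ms a$ and the projection of $\widehat{\ms A}$, and we only change $\widehat{\ms A}$ by a self-dual term, this is a routine check.
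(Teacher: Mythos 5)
There is a genuine gap here, and it is the identification of the unknown. In the paper's setup the $\mathfrak{spin}(4)$-component of every admissible connection is \emph{fixed}: $\scr{A}$ consists of lifts of the Levi-Civita connection, and the $\mf{h}$-component of $\mr{A}$ is pinned to $\mr{A}_{\mf{h}}$ by \eqref{eq:unique connection on P_H}. The only free parameter is the abelian $U(1)$-component, i.e.\ an imaginary-valued $1$-form $i\alpha$ with $\alpha\in T^{\ast}X$ --- four real parameters per point. Your plan instead varies an $\mf{sp}(1)$-valued (``self-dual spin connection'') $1$-form, which is twelve real parameters per point and, more importantly, takes you outside the class of connections the lemma quantifies over: a connection whose $\mf{sp}(1)_+$-part has been perturbed is no longer a lift of the Levi-Civita connection, so even if your fibrewise linear algebra worked you would not have produced a connection of the required form, and uniqueness within the correct class would remain unaddressed. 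There are also internal inconsistencies in the linear-algebra step: a $1$-form does not decompose into self-dual and anti-self-dual parts (that is a feature of $2$-forms), so ``restricted to self-dual $\mf{sp}(1)$-valued $1$-forms'' is not meaningful, and $(\R^4)^{\ast}\otimes\mf{sp}(1)$ is $12$-dimensional whereas your proposed identification with $\mf{sp}(1)_-\otimes\mf{sp}(1)_+$ is $9$-dimensional.

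For comparison, the paper's argument is much more elementary precisely because the unknown is only the $U(1)$-connection: writing $\mr{A}'=\mr{A}+i\alpha$ one has $\Dd_{\mr{A}+i\alpha}\widehat{u}=\Dd_{\mr{A}}\widehat{u}+i\alpha\bullet\widehat{u}$, and since $\widehat{u}$ is non-vanishing the Clifford multiplication $a\otimes h\mapsto h\cdot\bar{a}$ lets one solve the pointwise equation $i\alpha\bullet\widehat{u}=-\Dd_{\mr{A}}\widehat{u}$ for $\alpha$; uniqueness then comes from the unique continuation property of the Dirac operator, not from a pointwise isomorphism. Your instinct to reduce to a fibrewise linear problem is the right one, but it must be applied to the map $\alpha\mapsto i\alpha\bullet\widehat{u}$ on real $1$-forms, where the non-vanishing of $\widehat{u}$ (guaranteed by $\mu_H\circ\widehat{u}=0$ and freeness of the $H$-action) is exactly what makes quaternionic multiplication by $\widehat{u}$ injective on $\HH\cong\R^4$; the permuting $Sp(1)_+$-action and the identity $\I_{\xi}K^{M}_{\xi}=-\euler$ play no role in this lemma.
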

\begin{proof}
This is a consequence of the fact that since $\widehat{u}$ is non-vanishing, Clifford multiplication is modelled on quaternionic multiplication. Since the $\mf{h}$-component of the connection is given by the pull-back of the canonical connection on Riemannian submersion $\mu^{-1}_H(0)$, the only variable is the $U(1)$-conection component.  

Let $\mr{A}$ be the lift of a connection $\ms{A}$ on $Q$, determined by a $U(1)$-connection $A$. For a connection $\mr{A}' = \mr{A} + i\alpha$, $\alpha \in T^{\ast}X$, we have $\Dd_{\mr{A} + i\alpha}\widehat{u} = \Dd_{\mr{A}}\widehat{u} + i \alpha\bullet \widehat{u}$. To solve $\Dd_{\mr{A} + i\alpha}\widehat{u} = 0$, we need to solve $\Dd_{\mr{A}}\widehat{u} = -i \alpha\bullet \widehat{u}$.

Consider the map
\eqst{
\R^{4} \otimes \HH^{n^{\ast}} \lra \HH^{n^{\ast}}, ~~ a \otimes h \longmapsto a\bullet h = h\cdot \bar{a}
}

Fix $h_0 \in \HH^{n^{\ast}}$ and let $h'\in \HH^{n^{\ast}}$. Define $a := \displaystyle \frac{\bar{h'}^t\cdot h_0}{|h_0|^2} \in \HH \cong \R^4$. Then, 
\eqst{
a\bullet h_0 = h_0 \cdot \frac{\bar{h}_0^t\cdot h'}{|h_0|^2} = h'
}
Thus, for a a fixed $h_0$ and any $h'$, there exists an $a \in \R^4$ such that $a \bullet h_0 = h'$. In other words, $\Dd_{\mr{A}}\widehat{u} = -i \alpha\bullet \widehat{u}$ always has a solution. The uniqueness of the connection $\mr{A}$ follows from the unique continuation property of the Dirac operator.

\end{proof}

Observe that if we change the $U(1)$-connection, then $D_{\mr{A}'}\widehat{u} = D_{\mr{A}}\widehat{u} + i\alpha\bullet\widehat{u}$. In other words, the covariant derivative changes along the direction of the $U(1)$-orbit of $\widehat{u}$.
\begin{prop}\cite[Corollary 3.11]{scorpan2002}
\label{prop:covariant derivative formula}
\eqst{
\|\widehat{u}\|^2 \cdot D_{\mr{A}}\widehat{u} = i (D_{\phi}\omega) \bullet \widehat{u} + \left\langle D_{\mr{A}}\widehat{u},~ i\widehat{u}\right\rangle_{\R} \cdot i\widehat{u}
}
Consequently, if $D_{\mr{A}}\widehat{u} = 0$, then $\omega$ defines a K\"ahler structure on the base manifold, compatible with a metric which is a scalar multiple of $g_{\sst X}$.
\end{prop}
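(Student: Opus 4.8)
This is the pointwise identity of Scorpan \cite[Cor.~3.11]{scorpan2002} transported to the flat quaternionic model, so the plan is to reduce to that model, reproduce his computation, and then read off the consequence.

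First I would fix the model of Lemma \ref{lem:existence of connection}: $W^{+} \cong W^{-} \cong \HH^{n}$, the complex structure coming from the central $U(1)$ in $Spin^{c}(4)$ is left multiplication by $i$ (so that $i\widehat{u}$ is the generator of the $U(1)$-orbit through $\widehat{u}$), and Clifford multiplication by the cotangent vector dual to $a\in\HH$ is $h\longmapsto h\bar{a}$. Under $\Phi$ a self-dual $2$-form corresponds to an element of $\mf{sp}(1)\cong\mf{Im}(\HH)$ and acts on $W^{+}$ by a fixed real multiple of right multiplication by that imaginary quaternion; in particular Clifford multiplication by a \emph{nonzero} self-dual $2$-form is an invertible endomorphism of $W^{+}$ (its square is a negative multiple of $|\cdot|^{2}\,\id_{W^{+}}$, so it has trivial kernel). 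I would also record the identity
\[
D_{\phi}\omega \;=\; D_{\phi}\big(\Phi(\mu\circ\widehat{u})\big)\;=\;\Phi\big(\langle d\mu,\, D_{\mr{A}}\widehat{u}\rangle\big),
\]
which holds because $\mu$ is equivariant: exactly as in the proof of Theorem \ref{thm:Swann bundles kahler structure}, the Levi-Civita derivative of $\omega$ is obtained by applying $d\mu$ to $D_{\mr{A}}\widehat{u}$.

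Both sides of the asserted formula are $W^{+}$-valued $1$-forms, so it is enough to check, at a point and in a fixed tangent direction, an identity in $\HH^{n}$. Using the displayed identity to rewrite $(D_{\phi}\omega)\bullet\widehat{u}$ through $d\mu_{\widehat{u}}(D_{\mr{A}}\widehat{u})$, together with the explicit form of the $U(1)$-moment map and the quaternionic Clifford relations above, this collapses to the algebraic bookkeeping already performed by Scorpan; since Clifford multiplication here is modelled on quaternion multiplication, his computation applies directly. I expect this constant- and sign-chasing to be the only delicate point of the argument.

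For the consequence, suppose $D_{\mr{A}}\widehat{u} = 0$. Then the formula forces $i\,(D_{\phi}\omega)(v)\bullet\widehat{u} = 0$ for every tangent vector $v$, and since $i$ acts invertibly on $W^{+}$ this gives $(D_{\phi}\omega)(v)\bullet\widehat{u} = 0$. As $(D_{\phi}\omega)(v)$ is a self-dual $2$-form and $\widehat{u}$ is nowhere vanishing, invertibility of Clifford multiplication by a nonzero self-dual form forces $(D_{\phi}\omega)(v) = 0$; hence $D_{\phi}\omega = 0$, i.e.\ $\omega$ is parallel for the Levi-Civita connection. From here the argument follows verbatim the proof of Theorem \ref{thm:Swann bundles kahler structure}: $|\omega|$ is constant, so a constant rescaling $g'_{\sst X} = c\, g_{\sst X}$ (which leaves the Levi-Civita connection unchanged) normalizes $|\omega| = \sqrt{2}$; then $\omega$ is a parallel section of the twistor sphere bundle $\mf{s}\Lambda^{2}_{+}(X)$, hence defines a parallel --- and therefore integrable --- almost-complex structure $J_{\omega}$ compatible with $g'_{\sst X}$, with $\omega$ its closed K\"ahler form.
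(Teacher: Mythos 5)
Your proposal is correct and follows essentially the same route as the paper: both reduce to the flat quaternionic model where Clifford multiplication is quaternion multiplication and defer the identity itself to Scorpan's Corollary 3.11, the paper's only explicit computation being the relation $\omega\bullet\widehat{u} = (\mu\circ\widehat{u})\bullet\widehat{u} = -\tfrac{i}{2}\|\widehat{u}\|^{2}\widehat{u}$ coming from the explicit moment map $\mu(h)=\tfrac{1}{2}\bar{h}^{t}ih$, which your "explicit form of the $U(1)$-moment map" step encompasses. Your derivation of the consequence (invertibility of Clifford multiplication by a nonzero self-dual form forcing $D_{\phi}\omega=0$, then the twistor-sphere argument) is in fact spelled out more fully than in the paper, which leaves that part implicit, but it matches the argument used in the proof of Theorem \ref{thm:Swann bundles kahler structure}.
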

\begin{proof}
The hyperK\"ahler $U(1)$-moment map is given by 
\eqst{
\mu:\HH^{n} \lra \mf{sp}(1)^{\ast}, ~~ \mu(h) = \frac{1}{2}\bar{h}^tih
}
Therefore $\omega\bullet \widehat{u} = (\mu \circ \widehat{u})\bullet \widehat{u} = \displaystyle -\frac{i}{2}\|\widehat{u}\|^2\cdot \widehat{u}$. The identity now follows from Corollary 3.11 of \cite{scorpan2002}.

\end{proof}

Composed with Clifford multiplication, we obtain the \emph{splitting formula for Dirac operator}

\begin{prop}\cite[Theorem 3.15]{scorpan2002}
\label{prop:Dirac operator splitting formula}
Given a $H \times Spin^c$-structure $P_{\widehat{H}}$ and a connection $\mr{A}$ on $P_{\widehat{H}}$, we have
\eq{
\|\widehat{u}\|^2 \Dd_{\mr{A}}\widehat{u} = i \left( 2d^{\ast}\omega + \langle D_{\mr{A}}\widehat{u}, i\widehat{u}\rangle_{\R} \right) \bullet \widehat{u}
}
Therefore, if $\langle D_{\mr{A}}\widehat{u}, i\widehat{u}\rangle_{\R} = 0$, then every harmonic spinor $\widehat{u}$ defines a symplectic structure on the base manifold, compatible with a metric conformal to $g_{\sst X}$.
\end{prop}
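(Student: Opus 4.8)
The plan is to reduce the statement to Scorpan's splitting formula \cite[Theorem 3.15]{scorpan2002} by exhibiting the hyperK\"ahler $U(1)$-moment map in the concrete form it takes on $\HH^n$, and then to quote the usual argument that a nonzero $\Dd$-harmonic spinor whose associated self-dual $2$-form is $d$-closed defines a symplectic structure. First I would record the explicit moment map: for the standard $U(1)$-action on $\HH^n$ (acting by the complex structure $R_{\bar i}$, say), the hyperK\"ahler moment map is $\mu(h)=\tfrac12\,\bar h^{t} i\, h\in\mf{sp}(1)^{\ast}$. The key identity to establish — exactly as in the cited corollary — is that $\omega\bullet\widehat u=(\mu\circ\widehat u)\bullet\widehat u=-\tfrac{i}{2}\|\widehat u\|^{2}\,\widehat u$; this is a pointwise computation in $\HH^n$, using that Clifford multiplication on the total space is modelled on quaternionic multiplication (Lemma \ref{lem:existence of connection}) together with the defining relations of the $I_l$ on $\HH^n$.

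Next I would invoke Proposition \ref{prop:covariant derivative formula} (Scorpan's covariant-derivative formula) and compose it with Clifford multiplication. Applying $\mf{c}_{\widehat u}$ to
\[
\|\widehat u\|^{2}\,D_{\mr A}\widehat u = i\,(D_{\phi}\omega)\bullet\widehat u + \bigl\langle D_{\mr A}\widehat u,\, i\widehat u\bigr\rangle_{\R}\cdot i\widehat u
\]
and summing over the orthonormal coframe $e_0,\dots,e_3$, one converts $\sum_i e_i\bullet(D_\phi\omega)(\tilde e_i)\bullet\widehat u$ into the Clifford action of the $1$-form $d^{\ast}\omega$ on $\widehat u$ (here one uses that $\omega$ is self-dual so $d\omega$ and $d^{\ast}\omega$ carry the same information, giving the factor $2$), while the last term becomes $\langle D_{\mr A}\widehat u, i\widehat u\rangle_{\R}\cdot i\widehat u\bullet\widehat u\propto \langle D_{\mr A}\widehat u,i\widehat u\rangle_{\R}\cdot\|\widehat u\|^2\,\widehat u$ after using $\widehat u\bullet\widehat u=-\|\widehat u\|^2$ up to the identification of $W^+$ and $W^-$. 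This yields the displayed splitting formula $\|\widehat u\|^2\,\Dd_{\mr A}\widehat u = i\bigl(2d^{\ast}\omega + \langle D_{\mr A}\widehat u, i\widehat u\rangle_{\R}\bigr)\bullet\widehat u$.

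Finally, assume $\langle D_{\mr A}\widehat u, i\widehat u\rangle_{\R}=0$ and $\Dd_{\mr A}\widehat u=0$. Since $\widehat u$ is nowhere zero (it satisfies $\mu_H\circ\widehat u=0$ and $H$ acts freely on $\mu_H^{-1}(0)$), Clifford multiplication by $\widehat u$ is injective on the relevant bundle, so the splitting formula forces $d^{\ast}\omega=0$; equivalently, since $\omega$ is self-dual, $d\omega=0$. We already know from Theorem \ref{thm:HK-reduction kahler structure} that $\omega=\Phi(\mu\circ u)$ is a non-degenerate self-dual $2$-form on $X$ after a conformal change of metric normalising $|\omega|$; a closed non-degenerate $2$-form is by definition a symplectic form, so $\omega$ is symplectic, compatible with a metric conformal to $g_{\sst X}$. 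The main obstacle is the bookkeeping in the second paragraph: correctly tracking the $W^+\leftrightarrow W^-$ identification, the signs coming from $\mf{m}(h)^2=-|h|^2$, and the self-duality factor so that the coefficient of $d^{\ast}\omega$ comes out to exactly $2$ — everything else is either a direct quotation of \cite{scorpan2002} or the elementary remark that nowhere-vanishing spinors make Clifford multiplication injective.
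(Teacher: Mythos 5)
Your proposal is correct and matches the paper's treatment: the paper likewise obtains the splitting formula by composing Proposition \ref{prop:covariant derivative formula} with Clifford multiplication and defers the actual computation to \cite[Theorem 3.15]{scorpan2002}, and your deduction that $d^{\ast}\omega=0$ (hence $d\omega=0$ by self-duality) for a nowhere-vanishing harmonic spinor, giving a closed non-degenerate form compatible with a conformally rescaled metric, is exactly the intended argument. One small caution: the expression $\widehat{u}\bullet\widehat{u}$ in your bookkeeping is not literally Clifford multiplication of a spinor by a spinor; the final term is already the Clifford action on $\widehat{u}$ of the real-valued $1$-form $\langle D_{\mr{A}}\widehat{u}, i\widehat{u}\rangle_{\R}$, as in Scorpan, so no such identity is needed.
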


\begin{note}
Using the existence argument as in Lemma \ref{lem:existence of connection}, one can prove that there exists a unique connection $\mr{A}'$ such that $\langle D_{\mr{A}'}\widehat{u}, i\widehat{u}\rangle_{\R} = 0$
\end{note}

\begin{rmk}
It is worth noting that every non-vanishing, covariantly constant/ harmonic spinor defines the same K\"ahler/symplectic structure. Indeed, one can think of $\mu$ as a quadratic map
\eq{
\label{eq:quadratic map between spheres}
\mu: S^{4n-1} \lra S^2
}
where $S^{4n-1}$ is the unit sphere in $\HH^{n}$ and $S^2$ is the unit sphere in $\Lambda^2_+(\R^4)$. For $m_1\geq 2m_2$, every polynomial map from $S^{m_1}$ to $S^{m_2}$ is a constant map \cite{wood1968}. Therefore, for $n>1$, $\mu: S^{4n-1} \lra S^2$, $\mu$ is a constant map. 

This has the following consequence: let $\mr{A}'$ be the unique connection such that $\langle D_{\mr{A}'}\widehat{u}, i\widehat{u}\rangle_{\R} = 0$. Then, from Proposition \ref{prop:covariant derivative formula} and Theorem \ref{prop:Dirac operator splitting formula} we conclude 

\begin{lem}
\label{lem:kahler structure hk-reduction}
If there exists a non-vanishing spinor $\widehat{u}$ which is covariantly constant with respect to the unique connection $\mr{A}'$ such that $\langle D_{\mr{A}'}\widehat{u}, i\widehat{u}\rangle_{\R} = 0$, then every non-vanishing spinor is covariantly constant w.r.t a unique connection and the defines the same complex structure.

\end{lem}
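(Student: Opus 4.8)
The plan is to lean on two facts already in hand: the constancy of $\mu\colon S^{4n-1}\to S^2$ for $n>1$ (Wood's theorem, invoked just above), which makes $\omega_{\widehat v}:=\mu\circ\widehat v$ one and the same self-dual $2$-form $\omega$ for \emph{every} non-vanishing spinor $\widehat v$; and Proposition \ref{prop:covariant derivative formula}. From the hypothesis I would first extract that $D_\phi\omega=0$. Since $\widehat u$ is non-vanishing and $D_{\mr A'}\widehat u=0$, Proposition \ref{prop:covariant derivative formula} gives $i(D_\phi\omega)\bullet\widehat u=\|\widehat u\|^2\,D_{\mr A'}\widehat u-\langle D_{\mr A'}\widehat u,\,i\widehat u\rangle_{\R}\cdot i\widehat u=0$, both terms on the right vanishing because $D_{\mr A'}\widehat u=0$. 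Viewing $D_\phi\omega$ as a $1$-form valued in the skew-adjoint, traceless endomorphisms of $W^+\oplus W^-$ via the Clifford action, the vanishing of $(D_\phi\omega)(v)\bullet\widehat u$ at a point where $\widehat u\neq 0$ forces one eigenvalue of $(D_\phi\omega)(v)\bullet$ to be zero, hence all of them --- exactly the argument used at the end of the proof of Theorem \ref{thm:Swann bundles kahler structure}. Thus $D_\phi\omega=0$, i.e. $\nabla\omega=0$.

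Next, let $\widehat v$ be an arbitrary non-vanishing spinor and let $\mr A$ be any connection on $P_{\widehat H}$ whose $\mf h$-component is the prescribed $\mr A_{\mf h}$. By Wood's theorem $\mu\circ\widehat v=\omega$, so inserting $D_\phi\omega=0$ into Proposition \ref{prop:covariant derivative formula} applied to $\widehat v$ yields $\|\widehat v\|^2\,D_{\mr A}\widehat v=\langle D_{\mr A}\widehat v,\,i\widehat v\rangle_{\R}\cdot i\widehat v$; that is, at every point $D_{\mr A}\widehat v$ is a real multiple of the infinitesimal generator $i\widehat v$ of the $U(1)$-orbit. On the other hand, changing the $U(1)$-component of $\mr A$ by $i\alpha$, $\alpha\in T^{\ast}X$, replaces $D_{\mr A}\widehat v$ by $D_{\mr A}\widehat v+i\alpha\bullet\widehat v$ (the observation recorded before Proposition \ref{prop:covariant derivative formula}), and $\{\,i\alpha\bullet\widehat v:\alpha\in T^{\ast}X\,\}$ is precisely the set of $1$-forms pointwise proportional to $i\widehat v$ --- this is the surjectivity computation in the proof of Lemma \ref{lem:existence of connection}, applied to the non-vanishing $\widehat v$. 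Choosing $\alpha$ so that $i\alpha\bullet\widehat v=-D_{\mr A}\widehat v$ produces a connection $\mr A''$ with $D_{\mr A''}\widehat v=0$; and if $\mr A_1,\mr A_2$ both satisfy $D_{\mr A_i}\widehat v=0$, they share the prescribed $\mf h$-component and their $U(1)$-difference $i\alpha$ obeys $i\alpha\bullet\widehat v=0$, whence $\alpha=0$ since $\widehat v$ is non-vanishing. So $\widehat v$ is covariantly constant with respect to a unique connection.

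Finally, for that connection $\mr A''$ the ``consequently'' part of Proposition \ref{prop:covariant derivative formula} shows that $\mu\circ\widehat v$ defines a K\"ahler structure, compatible with the conformal rescaling of $g_{\sst X}$ normalising its norm to $\sqrt 2$; since $\mu\circ\widehat v=\omega$ by Wood's theorem, both the rescaled metric and the resulting integrable almost-complex structure $J_\omega$ depend only on $\omega$, so every non-vanishing spinor determines the same complex structure on $X$. This completes all assertions of the lemma.

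I expect the only genuine difficulty to be the middle paragraph: checking that the $U(1)$-gauge orbit of the covariant derivative of $\widehat v$ fills out \emph{all} $1$-forms pointwise proportional to $i\widehat v$, so that the residual term $D_{\mr A}\widehat v$ --- which lies in that set precisely because $D_\phi\omega=0$ --- can be removed entirely. This is the step that upgrades the conclusion $\Dd_{\mr A''}\widehat v=0$ of Lemma \ref{lem:existence of connection} to the stronger $D_{\mr A''}\widehat v=0$; everything else is bookkeeping with Proposition \ref{prop:covariant derivative formula}, Lemma \ref{lem:existence of connection}, and the constancy of $\mu$.
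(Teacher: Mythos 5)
Your overall route is the one the paper intends (Wood's theorem to make the self-dual form spinor-independent, then Proposition \ref{prop:covariant derivative formula}), and your first and last paragraphs are sound: $D_{\mr{A}'}\widehat{u}=0$ forces $(D_{\phi}\omega)\bullet\widehat{u}=0$ and hence $D_{\phi}\omega=0$ by the eigenvalue argument, and the complex structure depends only on the normalised form $\omega/|\omega|$, which by Wood's theorem is the same for every non-vanishing spinor. The gap is exactly where you suspected it, in the middle paragraph, and it is not repaired by the surjectivity computation of Lemma \ref{lem:existence of connection}. When you apply Proposition \ref{prop:covariant derivative formula} to a second spinor $\widehat{v}$, the form occurring in that identity is $\omega_{\widehat{v}}:=\Phi(\mu\circ\widehat{v})$, not $\omega=\Phi(\mu\circ\widehat{u})$. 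Wood's theorem only gives that these are \emph{pointwise positively proportional}: $\omega_{\widehat{v}}=f\,\omega$ with $f=|\mu\circ\widehat{v}|/|\mu\circ\widehat{u}|$ a positive function that is in general non-constant (it is controlled by $\|\widehat{v}\|^{2}$, which is arbitrary). Hence $D_{\phi}\omega_{\widehat{v}}=df\otimes\omega\neq 0$, and since $\omega\bullet\widehat{v}$ is pointwise a negative multiple of $i\widehat{v}$, the residual term $i\,(D_{\phi}\omega_{\widehat{v}})\bullet\widehat{v}$ is pointwise proportional to $\widehat{v}$ itself --- orthogonal to the set $\{\alpha\otimes i\widehat{v}\}$ swept out by changing the $U(1)$-part of the connection. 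Concretely, $\widehat{v}=g\,\widehat{u}$ with $g$ a non-constant positive function on $X$ is an admissible non-vanishing spinor with $D_{\mr{A}'}\widehat{v}=(dg/g)\otimes\widehat{v}$, and no change of $U(1)$-connection can kill a term in the $\widehat{v}$-direction.

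So the step that you correctly identified as the crux --- upgrading $\Dd_{\mr{A}''}\widehat{v}=0$ to $D_{\mr{A}''}\widehat{v}=0$ for an \emph{arbitrary} non-vanishing $\widehat{v}$ --- does not go through; the assertion needs to be weakened (to spinors of constant norm, to uniqueness of the connection for those spinors that do admit one, or to the statement about the induced complex structure alone, which your last paragraph does establish). The paper itself gives no more detail than ``from Proposition \ref{prop:covariant derivative formula} \dots we conclude'', so your write-up is a faithful expansion of its intended argument; the defect you have exposed lies in the statement as much as in the proof.
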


Similarly, every harmonic spinor, satisfying $\langle D_{\mr{A}}\widehat{u}, i\widehat{u} \rangle_{\R} = 0$ defines the same symplectic structure on $X$.
\end{rmk}


 \begin{rmk}
 
  One can repeat the entire argument in Theorems \ref{thm:Swann bundles kahler structure}, \ref{thm:HK-reduction kahler structure} and \ref{thm:HK-reduction symplectic structure} for $G=U(n)$ instead of $U(1)$. One only needs to observe that the $U(n)$-moment map splits into 2 components \[\mu_{\mf{u}(n)} = \mu_{\mf{su}(n)} + \mu_{\mf{u}(1)}\] and $\mf{u}(1)$-component defines a self-dual 2-form on $X$.
      
\end{rmk}
 

\section{Equivalence of \texorpdfstring{$Spin^c-$}~structures} 
\label{sec:equivalence of structures}
 Let $(X, g_{\sst X})$ be an oriented, compact, 4-dimensional manifold and let $Q \lra X$ be a fixed $Spin^c$-structure. A non-degenerate, self-dual 2-form $\beta \in \Gamma(X, \mf{s}\Lambda^2_+(X))$ defines an almost-complex structure $J$ on $X$, which is compatible with a metric in the conformal class of $g_{\sst X}$. Then $(\beta, J)$ define a $Spin^c$-structure $Q_{\beta} \lra X$.
 
 \begin{prop}
 \label{thm:equivalence of structures}
  Let $M$ be as in Theorem \ref{thm:Swann bundles kahler structure} or Theorem \ref{thm:HK-reduction kahler structure} and $u \in \Ss$ be a spinor whose range does not contain a fixed point of the $U(1)$-action. Let $A$ be a connection on $P_{U(1)}$ and $\ms{A}$ be the induced connection on $Q$. If $D_{\ms{A}} u = 0$, then the $Spin^c$-structure $Q_{\omega}$ is isomorphic to $Q$.
 \end{prop}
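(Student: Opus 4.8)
The plan is to realise $Q$ and $Q_\omega$ as one and the same $Spin^c$-lift of the K\"ahler $U(2)$-reduction of the frame bundle cut out by $\omega$, using the covariantly constant spinor to rule out any twisting line bundle. To begin I would record what the hypothesis already yields: by the proofs of Theorems \ref{thm:Swann bundles kahler structure} and \ref{thm:HK-reduction kahler structure}, $D_{\ms{A}}u=0$ makes $\mu\circ u$, and hence $\omega=\Phi(\mu\circ u)$, parallel, so after the conformal change $g_{\sst X}\leadsto g'_{\sst X}$ normalising $|\omega|=\sqrt 2$ the almost complex structure $J:=J_\omega$ is parallel and $(X,g'_{\sst X},J)$ is K\"ahler with K\"ahler form $\omega$. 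Since the $SO(4)$-frame bundles of $g_{\sst X}$ and $g'_{\sst X}$, and therefore their $Spin^c$-lifts, are canonically identified, I may work with $g'_{\sst X}$. By construction $Q_\omega$ is the canonical $Spin^c$-structure of $(X,g'_{\sst X},J)$: its underlying $U(2)$-reduction of the frame bundle is the unitary frame bundle $P_J$ of $(TX,J)$, its positive spinor bundle is $\Lambda^{0,0}\oplus\Lambda^{0,2}_J$ with distinguished nowhere-vanishing section $1\in\Gamma(\Lambda^{0,0})$, and $\det Q_\omega=\Lambda^{0,2}_J\cong K_J^{-1}$.

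Next, $\omega$ itself, being parallel, nowhere zero and of constant norm, reduces the frame bundle of $g'_{\sst X}$ to this same $U(2)$-structure $P_J$. Hence $Q$, being a $Spin^c$-lift of $P_{SO(4)}$, is a $Spin^c$-lift of $P_J$, so $Q\cong Q_\omega\otimes L$ for some Hermitian line bundle $L$, and the proposition amounts to proving $L$ trivial.

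To do so I would use the nowhere-vanishing spinor. In the hyperK\"ahler-reduction case, Proposition \ref{prop:1-1 correspondence} furnishes the lift $\widehat u:P_{\widehat H}\lra\HH^{n}$, which is nowhere zero and satisfies $D_{\mr{A}}\widehat u=0$ and $\mu_H\circ\widehat u=0$; by Propositions \ref{prop:covariant derivative formula} and \ref{prop:Dirac operator splitting formula} this places us in the $\HH^{n}$-valued $Spin^c$ framework of \cite{BLPR2000, scorpan2002}, with $\omega$ a positive multiple of the $U(1)$-moment map of $\widehat u$. Their analysis of the $Spin^c$-structure attached to a nowhere-vanishing, covariantly constant spinor then applies: such a spinor trivialises one line summand of the positive spinor bundle, Clifford multiplication by $\Lambda^{0,2}_J$ identifies the complementary line with $\Lambda^{0,2}_J$, and the sign carried by $\omega=\Phi(\mu\circ u)$ selects the decomposition $\Lambda^{0,0}\oplus\Lambda^{0,2}_J$ rather than its conjugate; comparing with the model for $Q_\omega$ above forces $L$ to be trivial and, together with the matching of $W^-$ via the constraint $\det W^+=\det W^-$ and of Clifford multiplication, yields an isomorphism $Q\cong Q_\omega$. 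In the Swann case I would argue identically with $\euler\circ u$ in place of $\widehat u$; it is nowhere vanishing because the permuting $Sp(1)$-action on $\swann$ is free.

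The step I expect to be the main obstacle is this last one: one has to check that the reduction of structure group coming from the generalized (or $\HH^{n}$-valued) spinor genuinely descends to the reduction coming from an ordinary rank-two spinor of $Q$ — equivalently, that the auxiliary $\widehat H$-twist and the high-dimensional generalized fibre contribute no spurious line bundle — so that one lands on $Q_\omega$ on the nose, and not merely on a $Spin^c$-structure with the same first Chern class.
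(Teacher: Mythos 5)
Your strategy is genuinely different from the paper's, and the difference is worth recording. The paper's proof never performs the line-bundle analysis at all: it observes that $D_{\ms{A}}u=0$ forces $D_{\phi}\omega=0$, rescales the metric by a constant so that $|\omega|=\sqrt{2}$, asserts that the resulting K\"ahler metric ``defines $Q_{\omega}$'', and then devotes the remainder of the argument to checking that a \emph{constant} conformal rescaling induces compatible isomorphisms of the frame bundles, of the complexified Clifford algebras, and hence of the associated $Spin^c$-bundles $Q$ and $Q'$. Your reformulation --- both $Q$ and $Q_{\omega}$ are lifts of the $U(2)$-reduction cut out by the parallel form $\omega$, so $Q\cong Q_{\omega}\otimes L$ and the content of the proposition is the triviality of $L$ --- is the sharper one; indeed it is the only formulation in which the statement has nontrivial content, since matching determinant line bundles only determines $L$ up to $2$-torsion in $H^2(X;\Z)$.

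However, the step you yourself flag as ``the main obstacle'' is a genuine gap, and nothing in your write-up closes it. The classical trivialization argument requires a nowhere-vanishing, covariantly constant section of the rank-two positive spinor bundle of $Q$: such a section spans a parallel line subbundle isomorphic to $\Lambda^{0,0}\otimes L$ and thereby trivializes $L$. Neither of the objects you propose to substitute for it is such a section. In the Swann case $\euler\circ u$ is a section of $u^{\ast}TM$, a bundle of real rank $\dim M=4n$, and in the reduction case $\widehat{u}$ corresponds to a section of $W^+\otimes E$ for an auxiliary bundle $E$ of complex rank $n$; a nowhere-vanishing section of $L\otimes E$ (the relevant eigenbundle of $\omega\bullet$ tensored with $E$) does not trivialize $L$ once $\mathrm{rk}\,E>1$, and there is no canonical projection from $u^{\ast}TM$ onto the rank-two $W^+$ of $Q$ that would carry $\euler\circ u$ to an honest spinor. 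So the decisive implication ``covariantly constant generalized spinor $\Rightarrow L$ trivial'' is exactly what remains to be proved. (For completeness: the paper's own proof does not supply this bridge either --- it compares $Q$ only with its constant conformal rescaling $Q'$, not with $Q_{\omega}$ as a second lift of the same frame bundle --- so you should not expect to find the missing step there.)
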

 
 \begin{proof}
The condition $D_{\ms{A}}u = 0$ implies that $D_{\phi}\omega= 0$, or in other words $d(\mu\circ u) = 0$ and thus $\omega$ has a constant length. Modifying the metric $g_{\sst X}$ to a metric $g'_{\sst X} := c\cdot g_{\sst X}$ for some $c \in \R$, we may assume  that $|\omega| = \sqrt{2}$. The metric $g'_{\sst X}$, is thus a K\"ahler metric defining the $Spin^c$-structure $Q_{\omega}$.
 
 Any metric $g''_{\sst X}:= e^{2f}g_{\sst X}$ in the conformal class of $g_{\sst}$ induces an isomorphism between the respective frame bundles
 \eq{
 P_{SO(4)} \xrightarrow{e^{-\pi^{\ast}f}} P''_{SO(4)}
 }
 
 where $\pi: P_{SO(4)} \lra X$. Any point $p \in P$ is a linear isomorphism $p:\R^4 \lra T_{\pi(p)}X$. Consider $p \in P_{SO(4)}$ and $p''\in P''_{SO(4)}$ such that $\pi(p) = \pi''(p'') = x$. Then, $p\circ (p'')^{-1}: \R^4 \lra \R^4$ is an automorphism. But $p'' = e^{-f(x)}p$. Therefore, $p\circ (p'')^{-1}$ is an isomorphism of $\R^4$ obtained by scalar multiplication by $e^{-f(x)}$. If $e^{-f}$ is constant, say $c$, then the isomorphism is independent of $x\in X$. In other words, we get an isomorphism between $(\R^4, g_{\R^4})$ and $(\R^4, c\cdot g_{\R^4})$, where $g_{\R^4}$ is the standard metric on $\R^4$. This induces an isomorphism of the respective complexified Clifford algebras $\gamma: \mc{C}l_4 \otimes \C \lra \mc{C}l''_4\otimes \C$ which preserves the positive elements and therefore, induces an isomorphism of the respective $Spin^c$ groups $\gamma: Spin^c(4) \lra (Spin^c(4))''$. Therefore, if $g'_{\sst X} := c\cdot g_{\sst X}$ and $Q$ and $Q'$ denote the respective $Spin^c$-bundles over $X$, then the map $\gamma$ induces an isomorphism of the bundles $Q$ and $Q'$. In conclusion, the $Spin^c$-structure $Q_{\omega}$ is isomorphic to $Q$.
 
\end{proof}


\section{Constant Scalar curvature}
\label{sec:const. scalar curvature}
In this section, we show that if the solution space to equations \eqref{eq:gen. seiberg-witten} contains a pair $(u, \mr{A})$ such that $D_{\mr{A}}u = 0$ then the scalar curvature of the base 4-dimensional manifold is necessarily (negative) constant.

Let $M$ be as in Theorem \ref{thm:Swann bundles kahler structure} or Theorem \ref{thm:HK-reduction kahler structure}. If a pair $(u, \ms{A})$ satisfies $D_{\ms{A}}u = 0$, we get $\Dd_{\ms{A}} u = 0$ and the Weitzenb\"ock formula \eqref{weitzenbock formula} gives
\eqst{
0 = \frac{s_{\sst X}}{4} \euler\circ u + \mc{Y}(F^+_{A})|_{u}
}
 Take the inner product on both the sides, with $\euler \circ u$ to get 
\eq{
\label{weitenbock with cov. const. spinor}
 0 = \frac{s_{\sst X}}{4}\cdot \rho_0 \circ u + \langle \mu\circ u, F^+_A \rangle
}

The second term in the above expression is computed as follows:
 \alst{
 \langle \mc{Y}(F^+_{A})|_{u}, \euler\circ u\rangle
 &= \sum^3_{l=1} \left \langle I_l K^M_{\langle F^+_{A}, \xi_l \rangle}|_{u}, I_{l}K^M_{\xi_l}|_{u} \right\rangle = \sum^3_{l=1} \left \langle K^M_{\langle F^+_{A}, \xi_l \rangle}|_{u}, K^M_{\xi_l}|_{u} \right\rangle \\ &= \sum^3_{l=1} \mu(\xi_l \otimes \langle F^+_{A}, \xi_l \rangle)\circ u = \langle \mu\circ u, F^+_A \rangle
}

Since $D_{\ms{A}} u = 0$, $d(\rho_0\circ u) = \langle d\rho_0, D_{\ms{A}}u \rangle = 0$,  $\rho_0 \circ u = c$, a constant. Therefore,
 \eqst{
 -\frac{s_{\sst X}}{4}\cdot \rho_0 \circ u
 = \langle \mu\circ u, F^+_A \rangle \\
 }

If the pair $(u, \ms{A})$ is a solution to generalized Seiberg-Witten equations \eqref{eq:gen. seiberg-witten}, then 
\eqst{
 -\frac{s_{\sst X}}{4}\cdot \rho_0 \circ u
 = |\mu\circ u|^2 \\
 }
Since $\rho_0 \circ u$ is a positive constant, this implies $s_{\sst X}$ is a negative constant. Therefore, the metric on $X$ is a metric of constant scalar curvature. Since the metric defined by the K\"ahler structure is a scalar multiple of $g_{\sst X}$, it follows that the K\"ahler structure defined by $\Phi(\mu\circ u)$ is of constant scalar curvature.

 \appendix
 
 \section{Vector bundles and connections}
 \label{appendix:VB and connections}
 Let $\pi_E: E \lra X$ be a vector bundle. Then consider $T\pi_E: TE \lra TX$. Then $\mc{V}_E \subset\ker(T\pi_E) \subset TE$ is called the \emph{verticle sub-bundle}. A connection on $E$ is a choice of a smooth \emph{horizontal sub-bundle} $\mc{H}_E$ such that $TE = \mc{H}_E \oplus \mc{V}_E$. Denote by $\pr_{\sst \mc{V}}$ and $\pr_{\sst \mc{H}_E}$ the projections to the verticle and the horizontal sub-bundles respectively. A connection on $E$ is said to be linear if $\pr_{\sst \mc{V}_E}$ is linear w.r.t $T\pi_E$ 
 
 \subsection*{Verticle lift}
 Consider the pull-back bundle $E \times_M E$. Then, the map 
 \eqst{
 \vl_E: E\times_M E \lra \mc{V}_E, ~~ (v,w) \longmapsto \frac{d}{dt}(v + tw)|_{t=0}
 }
 gives an isomorphism of vector bundle over $E$ and is called a \emph{vertical lift}.
 
 \subsection*{Connector}
 A \emph{connector} is a smooth map $K: TE \lra E$ is a smooth map that satisfies $K \circ \vl_{E} = \pr_2: E\times_M E \lra E$ and is a vector bundle homomorphism for both the vector bundle structures on $E$; i.e $T\pi_E: TE \lra TX$ and $\pi: TE \lra E$.
 
 Given a linear connection $\Phi: TE \lra TE$, its connector is given by the composition
 \begin{center}
 \label{connector} 
  \begin{tikzpicture}[%
    >=stealth,
    shorten >=2pt,
    shorten <=2pt,
    auto,
    node distance=2.5cm,
    text centered
  ]
    \node (a) {$TE$};
    \node (b) [right of=a] {$\mc{V}_E$};
    \node (c) [right of=b] {$E\times_X E$};
    \node (d) [right of=c] {$E$};

    \path[->] (a) edge     node      (phi)     {$\Phi$}             (b)
              (b) edge     node      (vl_E)    {$(\vl_{E})^{-1}$}   (c)
              (c) edge     node      (pr_2)    {$\pr_{2}$}          (d);
  \end{tikzpicture}
  
 \end{center}
 
 A connector on $E$ induces a covariant derivative
 \alst{
  & \nabla^K: \Gamma(X, E) \lra \Gamma(X, T^{\ast}X\otimes E)\\
  & \nabla^K_v(s) = K(Ts(v))~~\text{for}~~ v\in TX, ~~ s\in \Gamma(X, E)
 }
 
 \section{Principal bundles and covariant derivatives on associated bundles}
 \label{principal bundles and covariant derivatives on associated bundles}
Let $\pi_P:P \lra X$ be a principal $G$-bundle and $M$ be a manifold with a smooth action of $G$.

\begin{thm}\cite[Satz 3.5]{baum}
\label{section-map equivalence}
  There is a bijection between the space of $G$-equivariant maps from $P \rightarrow M$ and the sections of the fibre bundle $\mc{F}:= P\times_G M \lra X$
 \al{
 & C^{\infty}(P,M)^G \lra \Gamma(X, \mc{F})\nonumber \\
 & u \mapsto s_u,~~ \text{where} ~~ s_u (x) = [p, u(p)] ~~\text{and}~~ \pi_P(p) = x
 }
 \end{thm}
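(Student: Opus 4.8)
The plan is to exhibit an explicit two-sided inverse to the assignment $u\mapsto s_u$, checking at each stage that the maps involved are well-defined (as sections, resp.\ as equivariant maps) and smooth. The algebraic content reduces entirely to two standard facts about the associated bundle $\mc{F} = P\times_{G}M$: the defining equivalence relation $[p\cdot g,\, m]\sim[p,\, g\cdot m]$, and the fact that for a principal bundle the orbit map $G\to\pi_P^{-1}(x)$, $g\mapsto p\cdot g$, is a bijection for each $p$ in the fibre over $x$ (freeness and transitivity of the $G$-action on fibres).

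First I would verify that $s_u$ is a well-defined smooth section. If $p,p'\in\pi_P^{-1}(x)$ then $p' = p\cdot g$ for a unique $g\in G$, and $G$-equivariance of $u$ gives $u(p') = g^{-1}\cdot u(p)$, whence $[p',\, u(p')] = [p\cdot g,\, g^{-1}\cdot u(p)] = [p,\, u(p)]$ in $\mc{F}$; so $s_u(x)$ is independent of the chosen representative, and $\pi_{\mc{F}}\circ s_u = \id_X$ by construction. For smoothness, pick a local section $\sigma\colon U\to P$ of $P$ over an open $U\subset X$; then on $U$ one has $s_u = [\sigma,\, u\circ\sigma]$, a composition of smooth maps, so $s_u$ is smooth.

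Next I would construct the inverse $s\mapsto u_s$. Fix $p\in P$ with $x = \pi_P(p)$. Because $G$ acts freely and transitively on $\pi_P^{-1}(x)$, the map $M\to\mc{F}_x$, $m\mapsto[p,m]$, is a bijection; define $u_s(p)\in M$ to be the unique element with $s(x) = [p,\, u_s(p)]$. For $g\in G$ we have $[p\cdot g,\, g^{-1}\cdot u_s(p)] = [p,\, u_s(p)] = s(x)$, so by the same uniqueness (now applied at the point $p\cdot g$) we obtain $u_s(p\cdot g) = g^{-1}\cdot u_s(p)$; thus $u_s$ is $G$-equivariant. Smoothness is checked in a local trivialization $P|_U\cong U\times G$: this induces a trivialization $\mc{F}|_U\cong U\times M$ under which $s|_U$ corresponds to a smooth map $U\to M$, and unwinding the identification shows $u_s$ is smooth on $\pi_P^{-1}(U)$. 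Finally the two constructions are mutually inverse essentially by definition: $s_{u_s}(x) = [p,\, u_s(p)] = s(x)$, and $u_{s_u}(p)$ is the unique $m$ with $s_u(x) = [p,m] = [p,\, u(p)]$, forcing $m = u(p)$.

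The only genuinely technical step is the smoothness verification in both directions; everything else is bookkeeping with the defining equivalence relation and the freeness of the action. I would handle smoothness by making the induced trivialization of $\mc{F}$ explicit — namely $[\phi_U^{-1}(x, g),\, m]\mapsto (x,\, g\cdot m)$, where $\phi_U\colon P|_U\xrightarrow{\ \sim\ } U\times G$ is the principal trivialization — and then chasing $s_u$ and $u_s$ through this chart, where each becomes a composition of smooth maps. I do not anticipate any real obstacle beyond this routine check.
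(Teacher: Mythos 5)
Your proof is correct and complete: the well-definedness of $s_u$, the construction of the inverse $s\mapsto u_s$ via freeness and transitivity of the fibrewise $G$-action, the equivariance check, and the smoothness verifications in induced trivializations are all handled properly and with the right sign conventions (the relation $[p\cdot g, m]\sim[p, g\cdot m]$ matching $u(p\cdot g)=g^{-1}\cdot u(p)$). The paper itself gives no proof of this statement --- it is quoted as Satz 3.5 from Baum's book --- and your argument is exactly the standard one found there.
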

 
 \subsection*{Covariant derivatives on asociated fibre-bundles}
Let $A \in \Lambda^1(P, \mf{g})^G$ be a connection on $P$. We define the covariant derivative of $u \in \Map(P, M)^{G}$ w.r.t as
 \alst{
 \label{covariant derivative}
 D_A u = Tu + K^M_{A}|_u \in \Hom(TP, TM)_{hor}
 }
 where $K^M_{A}|_{u}: TP \rightarrow u^{\ast}TM$ is vector bundle homomorphism and the subscript ``hor" implies that $D_A u$ vanishes for all $v \in \mc{V}_{TP}$
\eqst{
K^M_{A}|_{u} (v) = K^M_{A(v)}|_{u(p)}, ~~~ v\in T_p P
}
 \begin{prop}\cite[Theorem 42.1]{KM}
 \label{frechet manifold}
  Let $X$ be a compact manifold. Then the space $\Map(P, M)^{G}$ is a Frech\'et manifold modelled on topological vector spaces:
  \eqst{
  T_u\Map(P,M)^G = \Gamma(P, u^{\ast}TM)^G
  }
 \end{prop}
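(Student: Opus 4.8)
The plan is to reduce the statement to the standard manifold-of-mappings construction of Kriegl and Michor. First I would invoke the bijection of Theorem~\ref{section-map equivalence} to identify $\Map(P,M)^{G}$ with the space of smooth sections $\Gamma(X,\mc{F})$ of the associated fibre bundle $\mc{F}:=P\times_G M \lra X$. Since $X$ is compact and $\mc{F}$ is a locally trivial fibre bundle with typical fibre $M$, the space $\Gamma(X,\mc{F})$ is precisely the object treated in \cite[Theorem 42.1]{KM}; transporting the resulting chart structure back across this bijection will yield the Fr\'echet manifold structure on $\Map(P,M)^{G}$ together with the claimed tangent space.

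To construct the charts explicitly, I would fix a $G$-invariant Riemannian metric on $M$, which exists because the relevant structure group ($Spin^{c}(4)$, or $H$) is compact, so that averaging an arbitrary metric over $G$ produces one; this descends to a fibre metric on $\mc{F}$. The induced fibrewise exponential map gives a smooth fibre-preserving map $\exp$ on $\mc{F}$, and $(z,v)\mapsto(z,\exp_{z}v)$ carries a neighbourhood of the zero section of the vertical bundle $\mc{V}\mc{F}$ diffeomorphically onto a neighbourhood of the diagonal in $\mc{F}\times_X\mc{F}$. Given a section $s_0\in\Gamma(X,\mc{F})$, every section $s$ that is $C^{0}$-close to $s_0$ is then written uniquely as $s(x)=\exp_{s_0(x)}\xi(x)$ for a small $\xi\in\Gamma(X,s_0^{\ast}\mc{V}\mc{F})$, and this correspondence $s\leftrightarrow\xi$ is the chart at $s_0$. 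Because $X$ is compact, $\Gamma(X,s_0^{\ast}\mc{V}\mc{F})$ carries the $C^{\infty}$-topology of a Fr\'echet space, so each chart is modelled on a Fr\'echet space.

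The decisive step is to verify that the chart transitions are smooth in the Fr\'echet (convenient) sense. A transition map is given by postcomposition with the smooth fibre map $\exp^{-1}_{s_1(\cdot)}\circ\exp_{s_0(\cdot)}$ between the pullback vertical bundles, restricted to the domain where it is defined; its smoothness follows from the $\Omega$-lemma of \cite{KM}, that is, from the fact that postcomposition with a smooth fibre-bundle morphism induces a smooth map on spaces of sections over the compact base $X$. This is the main technical obstacle, since it is exactly where the infinite-dimensional calculus enters; once it is granted, the atlas above makes $\Gamma(X,\mc{F})$ a Fr\'echet manifold, and one must finally check that the identification of Theorem~\ref{section-map equivalence} is a diffeomorphism so that the structure transfers to $\Map(P,M)^{G}$.

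Finally, reading off the model space identifies the tangent space. The chart at $s_0$ is modelled on $\Gamma(X,s_0^{\ast}\mc{V}\mc{F})$, so $T_{s_0}\Gamma(X,\mc{F})=\Gamma(X,s_0^{\ast}\mc{V}\mc{F})$. Under the associated-bundle description the vertical bundle is $\mc{V}\mc{F}=P\times_G TM$, and pulling back along $s_0$ and passing back through the equivariant-map/section dictionary of Theorem~\ref{section-map equivalence} gives a canonical isomorphism $\Gamma(X,s_0^{\ast}\mc{V}\mc{F})\cong\Gamma(P,u^{\ast}TM)^{G}$, where $u$ is the equivariant map corresponding to $s_0$. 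This establishes $T_u\Map(P,M)^{G}=\Gamma(P,u^{\ast}TM)^{G}$, as claimed.
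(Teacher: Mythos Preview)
The paper does not actually prove this proposition: it is stated with a citation to \cite[Theorem~42.1]{KM} and no argument is given. Your proposal is a faithful outline of the standard Kriegl--Michor construction that the citation points to --- identify $\Map(P,M)^{G}$ with $\Gamma(X,\mc{F})$ via Theorem~\ref{section-map equivalence}, build charts by a fibrewise exponential of a $G$-invariant metric, and verify smoothness of transitions by the $\Omega$-lemma --- so there is nothing to compare against and your sketch is appropriate.

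One small caveat: the group $G$ acting on $M$ is not assumed compact in the general setup of this appendix (only in the applications is it $Spin^c(4)$ or $H$), so the averaging argument for a $G$-invariant metric is not available in full generality; you would need either to assume compactness of $G$, or to invoke the existence of an invariant metric as an additional hypothesis, or to bypass the metric entirely by using a $G$-equivariant local addition on $M$ as in \cite{KM}. This does not affect the cases used in the body of the paper.
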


 Therfore the covariant derivative can be interpreted as a section of the infinite-dimensional Frech\'et vector-bundle:
 \begin{center}
\begin{tikzpicture}[%
    >=stealth,
    shorten >=2pt,
    shorten <=2pt,
    auto,
    node distance=5cm,
    text centered
  ]
    \node (a) {$\Hom(\mc{H}_A, TM)^G$};
    \node (b) [right of=a] {$\Map(P, M)^G$};

    \path[->] (a) edge                 node [below, midway]     (pi)   {$\Pi$}   (b)
              (b) edge [bend right=20] node [above, midway]       (dirac)   {$D_A$}   (a);
  \end{tikzpicture}

\end{center}

Note that 
 \eqst{
 T\Hom(\mc{H}_A, TM)^G = \Hom(\mc{H}_A, TTM)^G, ~~ \mc{V}\Hom(\mc{H}_A, TM)^G = \Hom(\mc{H}_A, \mc{V}TM)^G
 } 
  A connector $\psi: TTM \lra TM$ induces a connector $\Psi$ on $\Hom(\mc{H}_A, TM)^G$. Using this, one can compute the linearization of $\nabla^{\Psi}D_{A}$.

 \begin{lem}\cite[Section 2.4]{henrik}
  \label{linearization cov. der.}
  The linearization $\nabla^{\Psi}D_{A}$ of the covariant derivative coincides with the first-order differential operator 
  \eq{
\nabla^{A, \psi} :~ C^{\infty}(P, TM)^{G} \lra \Hom(TP, TM)^{G}_{hor}, ~~ v \longmapsto \psi \circ Tv \circ \pr_{\Hh_{A}}
 }
 \end{lem}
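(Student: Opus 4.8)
The plan is to strip away the infinite-dimensional wrapping and reduce the assertion to a one-line computation inside $TTM$. Three ingredients should be assembled first. (i) By Proposition \ref{frechet manifold}, $\Map(P,M)^G$ is a Fréchet manifold with $T_u\Map(P,M)^G=\Gamma(P,u^{\ast}TM)^G$, and $D_A$ is a smooth section of the Fréchet vector bundle $\Hom(\mc{H}_A,TM)^G\to\Map(P,M)^G$ whose fibre over $u$ is the vector space $\Gamma(P,\Hom(\mc{H}_A,u^{\ast}TM))^G$; here one uses the identifications $T\Hom(\mc{H}_A,TM)^G=\Hom(\mc{H}_A,TTM)^G$ and $\mc{V}\Hom(\mc{H}_A,TM)^G=\Hom(\mc{H}_A,\mc{V}TM)^G$ recorded above. (ii) The connector $\Psi$ on $\Hom(\mc{H}_A,TM)^G$ induced by $\psi$ is postcomposition, $\Psi(\Xi)=\psi\circ\Xi$ for $\Xi\in\Hom(\mc{H}_A,TTM)^G$, and $\nabla^{\Psi}$ denotes the covariant derivative it defines on sections. (iii) On a horizontal vector $w\in(\mc{H}_A)_p=\ker A_p$ one has $A(w)=0$, hence $K^M_{A(w)}=0$ and $(D_Au)(w)=Tu(w)$: the fundamental-vector-field term only kills the vertical part of $Tu$ and is invisible on $\mc{H}_A$.

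First I would fix $u\in\Ss$ and a tangent direction $v\in T_u\Map(P,M)^G=\Gamma(P,u^{\ast}TM)^G$, choose any smooth curve $t\mapsto u_t$ in $\Map(P,M)^G$ with $u_0=u$ and velocity $v$ at $t=0$, and read off from (i) and (ii) that $\nabla^{\Psi}_vD_A=\Psi\!\left(\tfrac{d}{dt}D_Au_t|_{t=0}\right)=\psi\circ\left(\tfrac{d}{dt}D_Au_t|_{t=0}\right)\in\Hom(\mc{H}_A,TM)^G$. Since $\tfrac{d}{dt}D_Au_t|_{t=0}=T(D_A)(v)$ depends only on $v$ and not on the chosen curve, the right-hand side is well defined, and the remaining task is to evaluate it.

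Next I would test against a horizontal $w\in(\mc{H}_A)_p$. Using (iii), $(D_Au_t)(w)=Tu_t(w)$ for every $t$, so $\nabla^{\Psi}_vD_A(w)=\psi\!\left(\tfrac{d}{dt}Tu_t(w)|_{t=0}\right)$. Regarding $(t,p)\mapsto u_t(p)$ as a smooth map $\R\times P\to M$ and writing $v\colon P\to TM$ also for the equivariant map $p\mapsto\tfrac{d}{dt}u_t(p)|_{t=0}$, interchanging the $t$-differentiation with the $P$-differentiation yields $\psi\!\left(\tfrac{d}{dt}Tu_t(w)|_{t=0}\right)=\psi(Tv(w))$, hence $\nabla^{\Psi}_vD_A(w)=\psi(Tv(w))$ for all $w\in\mc{H}_A$. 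Both $\nabla^{\Psi}_vD_A$ and $\psi\circ Tv\circ\pr_{\Hh_A}$ vanish on $\mc{V}_{TP}$ (the first by construction, the second by the factor $\pr_{\Hh_A}$) and agree on $\mc{H}_A$, so they agree on all of $TP$; moreover $G$-equivariance is preserved throughout, since $\mc{H}_A$ and $\pr_{\Hh_A}$ are $G$-invariant ($A$ being $G$-invariant) and $\psi$, $Tv$ are equivariant for the induced $G$-actions on $TM$, $TTM$. Letting $u$ and $v$ vary, this would establish exactly that the linearization of $D_A$ at $u$ is the restriction to $\Gamma(P,u^{\ast}TM)^G$ of the first-order operator $v\mapsto\psi\circ Tv\circ\pr_{\Hh_A}$ on $C^\infty(P,TM)^G$; that is, $\nabla^{\Psi}D_A=\nabla^{A,\psi}$.

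I expect the main obstacle to be bookkeeping rather than ideas: making the covariant derivative on the Fréchet bundle rigorous and checking that the naively-defined postcomposition connector really is the one induced by $\psi$ — this is the content of \cite[Section 2.4]{henrik}, resting on the convenient calculus of \cite{KM}. Within the computation itself, the one genuinely delicate point is the interchange of the two differentiations in $\psi\!\left(\tfrac{d}{dt}Tu_t(w)|_{t=0}\right)=\psi(Tv(w))$: a priori the two iterated tangent maps differ by the canonical involution $\kappa$ of $TTM$, and the identification uses that the Levi-Civita connector $\psi$ is symmetric, $\psi\circ\kappa=\psi$ (equivalently, torsion-free). Once these two points are granted, what remains is a formal unwinding of the definitions.
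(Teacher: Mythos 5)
Your proposal is correct and is precisely the computation that the paper delegates to the cited reference \cite[Section 2.4]{henrik} without reproducing it: identify the connector $\Psi$ with postcomposition by $\psi$, observe that the fundamental-vector-field term of $D_A$ vanishes on $\mc{H}_A$ so that $(D_Au_t)(w)=Tu_t(w)$ there, and swap the two differentiations using the symmetry $\psi\circ\kappa=\psi$ of the torsion-free connector. You correctly isolate the only delicate point (the canonical involution of $TTM$), so nothing further is needed.
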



\bibliographystyle{alpha}
\bibliography{references}

\end{document}